 \tikzset{
  on each segment/.style={
    decorate,
    decoration={
      show path construction,
      moveto code={},
      lineto code={
        \path [#1]
        (\tikzinputsegmentfirst) -- (\tikzinputsegmentlast);
      },
      curveto code={
        \path [#1] (\tikzinputsegmentfirst)
        .. controls
        (\tikzinputsegmentsupporta) and (\tikzinputsegmentsupportb)
        ..
        (\tikzinputsegmentlast);
      },
      closepath code={
        \path [#1]
        (\tikzinputsegmentfirst) -- (\tikzinputsegmentlast);
      },
    },
  },
  mid arrow/.style={postaction={decorate,decoration={
        markings,
        mark=at position 0.6 with {\arrow[#1]{stealth}} 
      }}},
}
\numberwithin{figure}{section}
\setlist[itemize]{leftmargin=35pt}
\newtheorem{theorem}{Theorem}[section]
\newtheorem{lemma}[theorem]{Lemma}
\newtheorem{corollary}[theorem]{Corollary}
\newtheorem{main theorem}[theorem]{Main Theorem}
\newtheorem{definition}[theorem]{Definition}
\newtheorem{remark}[theorem]{Remark}
\newtheorem{example}[theorem]{Example}
\newtheorem{assumption}[theorem]{Assumption}
\numberwithin{equation}{section}
\def\<{\langle} 
\def\>{\rangle} 
\def\NN{\mathbb{N}} 
\newcommand{\Pic}{Figure\ }
\newcommand{\modcat}{\mathsf{mod}}
\newcommand{\proj}{\mathsf{proj}}
\newcommand{\Gproj}{\mathsf{G}\text{-}\mathsf{proj}}
\newcommand{\npGproj}{\mathsf{G}\text{-}\mathsf{proj}_{\pmb{\oslash}}}
\newcommand{\ind}{\mathsf{ind}}
\newcommand{\kk}{\mathds{k}} 
\newcommand{\Q}{\mathcal{Q}} 
\newcommand{\I}{\mathcal{I}} 
\newcommand{\e}{\varepsilon}
\newcommand{\Hom}{\mathrm{Hom}} %
\newcommand{\End}{\mathrm{End}} %
\newcommand{\Ext}{\mathrm{Ext}} %
\newcommand{\ol}[1]{\overline{#1}}
\newcommand{\w}[1]{\tilde{#1}}
\newcommand{\To}[1]{\mathop{-\!\!\!-\!\!\!\longrightarrow}\limits^{#1}}
\def\alg{\mathit{\Lambda}}
\def\C{\mathscr{C}}
\def\s{\mathfrak{s}}
\def\t{\mathfrak{t}}
\def\emb{\mathfrak{e}}
\def\calR{\mathcal{R}}
\def\GcalR{\mathrm{G}\mathcal{R}}
\def\res{\mathrm{res}}
\newcommand{\defines}{\it\color{black}}
\title{ \bf Recollements and Gorenstein projective modules for gentle algebras
\thanks{2020 Mathematics Subject Classification:
16G10, 
16G20, 
18G05. 
}
\
\thanks{Keywords: Gorenstein projective modules, gentle algebras, recollements.}
}
\author{%
Yu-Zhe Liu$^1$, Dajun Liu$^2$, Xin Ma$^{3,\ddag}$\\
{\footnotesize 1. School of Mathematics and Statistics, Guizhou University, Guiyang 550025, Guizhou, P. R. China;}\\
{\footnotesize E-mail: liuyz@gzu.edu.cn / yzliu3@163.com;
  ORCID: \href{https://orcid.org/0009-0005-1110-386X}{0009-0005-1110-386X}.} \\
{\footnotesize 2. Anhui Polytechnic University, Wuhu 241000 Anhui, P. R. China;}\\
{\footnotesize E-mail: liudajun@ahpu.edu.cn;
  ORCID: \href{https://orcid.org/0009-0001-6073-7587}{0009-0001-6073-7587}. }
\\
{\footnotesize 3. College of Science, Henan University of Engineering, Zhengzhou 451191, Henan, P. R. China;}
\\
{\footnotesize E-mail: maxin@haue.edu.cn.
  }
\\
{\footnotesize $\ddag$: Corresponding Author}
}
\date{ }
\begin{document}






\maketitle

\begin{abstract}

Let $A=\kk\Q/\I$ be a gentle algebra.
We provide a bijection between non-projective indecomposable Gorenstein
projective modules over $A$ and special recollements induced by an arrow $a$ on
any full-relational oriented cycle $\C$, which satisfies some interesting
properties, for example, the tensor functor $-\otimes_A A/A\e A$ sends
Gorenstein projective module $aA$ to an indecomposable projective $A/A\e A$-module; and $-\otimes_A A/A\e A$ preserves Gorenstein projective objects if any
two full-relational oriented cycles do not have common vertex.


\end{abstract}



\section{Introduction}
Recollements of Abelian and triangulated categories were introduced by Be{\u\i}linson, Bernstein and Deligne \cite{BBD1982} in
connection with derived categories of sheaves on topological spaces with the idea that one triangulated category may be ``glued together'' from two others, which play an important role in
representation theory of algebras.
Reduction techniques with respect to a recollement of triangulated or Abelian categories have been investigated widely, see for example,
\cite[etc]{ChX17R,H93R,H14R, Lu2017,MZH19G,Psa2014, PV19P,Q16R,Zh23R, ZZ2020,ZZ2020JAA}.
We use $\modcat(A)$ to denote the category of finitely generated right modules  over an Artinian algebra $A$,
and use $\Gproj(A)$ to denote the full subcategory of $\modcat(A)$ containing all Gorenstein-projective (=G-projective for shrot) modules.
In particular, in \cite{Lu2017}, for two Artinian algebras $A$ and $B$ and a functor $F_1: \modcat(A) \to \modcat(B)$ preserving projective objects,
Lu showed that if $F_1$ has a right adjoint functor $F_2$ and satisfies one of the following conditions:
\begin{itemize}
  \item[$\spadesuit$] $F_1$ is a functor such that the following two conditions hold:
    \begin{itemize}
      \item $\Ext_A^k(X, F_2(Q))=0$  for any projective right $B$-module $Q$, any G-projective module $X$, and any $k>0$;
      \item for arbitrary G-projective right $A$-module $X_1$,
      the short exact sequence $0\to X_2 \to P \to X_1 \to 0$ in $\modcat(A)$ ($P$ is projective),
      we obtain that 
      $0\to F_1(X_2) \to F_1(P) \to F_1(X_1) \to 0$ is also exact;
    \end{itemize}
  \item[$\clubsuit$] $F_1|_{\Gproj(A)}$ is exact, and, for any projective right $B$-module $Q$,
  $\mathrm{proj.dim}F_2(Q)$, the projective dimension of $F_2(Q)$, is finite
  or $\mathrm{inj.dim}F_2(Q)$, the injective dimension of $F_2(Q)$, is finite;
\end{itemize}
then $F_1$ preserves G-projectives.

In this paper, we will provide a functor $F_1$ does not satisfy $\spadesuit$ and $\clubsuit$ by using some special recollement for gentle algebra such that $F_1$ preserves G-projective, see Corollary \ref{coro:of main 1}.
To do this, we provide a method to describe G-projective modules by some special recollements, see Theorem \ref{thm:main 1}.

We focus on some special recollements for gentle algebras and
provide a reduction on non-projective indecomposable G-projective modules in the middle category of the recollements.
This work began with the works of Kalck \cite{Kal2015}, Chen-Lu \cite{CL2017, CL2019}, Li-Zhang \cite{LiZ2021}, and our research on the G-projective modules and $\tau$-tilting theory over gentle algebras in \cite{LZHpre}.
Kalck originally provided the descriptions of G-projective modules and characterized the singularity categories for gentle algebras, see \cite[Theorem 2.5]{Kal2015}.
In \cite{LZHpre}, the authors considered the G-projective support $\tau$-tilting modules, introduced in \cite{LiZ2021}, over gentle algebras,
and showed that a gentle algebra is representation-finite if and only if, for any G-projective support $\tau$-tilting module $G$, the endomorphism algebra $\End_AG$ is representation-finite by using marked surfaces introduced by Baur$-$Coelho-Sim{\~o}es \cite{BCS2021}.
This conclusion provides a description of the representation-type of gentle algebras by using G-projective modules.
Our main results provide a description of non-projective indecomposable G-projective modules over gentle algebras by using recollements.

Assume that $\kk$ is an algebraically closed field.
Let $A=\kk\Q/\I$ be a gentle algebra. Assume that the bound quiver $(\Q,\I)$ contains at least one full-relational oriented cycle (see Subsection \ref{subsect:gent}) $\C=a_1\cdots a_{\ell(\C)}$ of length $\ell(\C)$,
and the lengths of all full-relational oriented cycles are greater than or equal to $3$.
Then each arrow $a_t$ corresponds to a non-projective indecomposable G-projective module $a_tA$ by {\rm\cite[Theorem 2.5]{Kal2015}} {\rm(}see Theorem \ref{thm:Kalck}{\rm)},
and corresponds to a recollement
\hspace{-0.5cm}\[\calR_{\C,t} := \ \ \
 \xymatrix@C=2cm{
  \modcat(\ol{A}_{\C,t})
  \ar[r]^{\emb_{\C,t}}
& \modcat(A)
  \ar[r]^{\res_{\C,t}}
  \ar@/^1.8pc/[l]^{H_{\C,t}}
  \ar@/_1.8pc/[l]_{T_{\C,t}}
& \modcat(\w{A}_{\C,t})
  \ar@/_1.8pc/[l]_{\w{T}_{\C,t}}
  \ar@/^1.8pc/[l]^{\w{H}_{\C,t}}
}\]
by the idempotent \[\epsilon_{\C,t} = \sum_{i\ne \s(a_t), \t(a_t)} \e_i, \]
see Section \ref{sect:main},
where $e_v$ is the idempotent of $A$ corresponded by the vertex $v$ of $\Q$,
$\ol{A}_{\C,t}:=A/A\epsilon_{\C,t}A$, and $\w{A}_{\C,t}:=\epsilon_{\C,t}A\epsilon_{\C,t}$.
The following theorem is the first main result of our paper.

\begin{theorem}[Theorem \ref{thm:main 1}]
The map
\[\varphi: \ind(\npGproj(A)) \to \GcalR(A), \
a_tA \mapsto \calR_{\C, t}\]
from the set $\ind(\npGproj(A))$ of all non-projective indecomposable G-projective right $A$-modules {\rm(}up to isomorphism{\rm)} to the set $\GcalR(A):=\{\calR_{\C,t} \mid \C \text{ is a full-relational cycle}, 1\le t\le \ell(\C)\}$ is a bijection such that the following statements hold.
\begin{itemize}
  \item
    The right $\ol{A}_{\C,t}$-module $T_{\C,t}(G)$ is indecomposable and projective for any right $A$-module $G\in\ind(\npGproj(A))$.
  \item
    If there is a right $A$-module $G\in\ind(\npGproj(A))$ such that the dimension of $\res_{\C,t}(G)$, as a $\kk$-linear space, is greater than or equals to $2$, then $A$ is representation-infinite.
\end{itemize}
\end{theorem}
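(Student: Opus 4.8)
The plan is to obtain the bijection from Kalck's classification and then to read off the two extra properties directly from the explicit shape of the functors $T_{\C,t}$ and $\res_{\C,t}$.

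\emph{The bijection.} By Theorem~\ref{thm:Kalck} the set $\ind(\npGproj(A))$ consists precisely of the modules $a_tA$, where $a_t$ runs over the arrows lying on a full-relational oriented cycle, and distinct such arrows yield non-isomorphic modules. In a gentle algebra the relation-successor of an arrow $\alpha$ (the unique $\beta$ with $\alpha\beta\in\I$, when it exists) is uniquely determined, so an arrow lies on at most one full-relational oriented cycle; hence the datum $(\C,t)$ is recovered from $a_tA$, and since $\GcalR(A)$ is indexed by the pairs $(\C,t)$ the assignment $\varphi\colon a_tA\mapsto\calR_{\C,t}$ is a well-defined bijection. No separate check is needed that $\calR_{\C,t}$ is a recollement: it is the standard recollement of $\modcat(A)$ attached to the idempotent $\epsilon_{\C,t}$, built from $\ol A_{\C,t}=A/A\epsilon_{\C,t}A$ and $\w A_{\C,t}=\epsilon_{\C,t}A\epsilon_{\C,t}$.

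\emph{$T_{\C,t}(G)$ is indecomposable projective.} Here $G=a_tA$ and $T_{\C,t}$, being the left adjoint of the embedding $\emb_{\C,t}$, is $-\otimes_A\ol A_{\C,t}$; thus $T_{\C,t}(a_tA)\cong a_tA/(a_tA)\epsilon_{\C,t}A$. Put $u=\s(a_t)$, $v=\t(a_t)$; since $\ell(\C)\ge 3$ we have $u\ne v$ and $\ol A_{\C,t}$ is supported on $\{u,v\}$. From the full-relational condition $a_ta_{t+1}\in\I$ together with the gentle axioms one gets the explicit description $a_tA\cong P_v/a_{t+1}A$, so $a_tA$ is uniserial with top $S_v$ and generator $\bar a_t$ living at the vertex $v$. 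As $v\notin\operatorname{supp}(\epsilon_{\C,t})$ the generator survives the quotient, $(a_tA)\epsilon_{\C,t}A$ is exactly the submodule of $a_tA$ supported away from $\{u,v\}$, and comparing the remaining relations at $v$ with those of $\ol A_{\C,t}$ shows that the quotient is $e_v\ol A_{\C,t}$, the indecomposable projective $\ol A_{\C,t}$-module at $v$. This gives the first bullet (and also the assertion of the abstract about $aA$).

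\emph{The dimension bound forces infinite representation type.} The functor $\res_{\C,t}$ is the exact restriction $(-)\epsilon_{\C,t}$, so $\res_{\C,t}(a_tA)=(a_tA)\epsilon_{\C,t}$ is the sum of the homogeneous components of the uniserial module $a_tA$ at the vertices $\ne u,v$; equivalently, $\dim_\kk\res_{\C,t}(a_tA)$ is the number of composition factors of $a_tA$ situated outside $\{u,v\}$. If this is $\ge 2$, then $v$ carries an arrow $b\notin\C$ with $a_tb\notin\I$ and the forward walk $a_t,\,a_tb,\,a_tbc,\dots$ underlying $a_tA$ proceeds at least two further steps through vertices outside $\{u,v\}$. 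Using the defining properties of a full-relational oriented cycle --- in particular how, by the gentle axioms, the arrow $b$ and its continuations must interact with $\C$ and with the other full-relational cycles (whose lengths are also $\ge 3$) --- one splices this branch onto $\C$ to produce a band, i.e. a cyclically reduced string that is not a proper power; a band yields a $1$-parameter family of pairwise non-isomorphic indecomposable $A$-modules, so $A$ is representation-infinite. (Equivalently, one may argue on the marked surface of~\cite{BCS2021}, as in~\cite{LZHpre}: such a branch forces the surface to be something other than a polygon.) The construction of the band from the branch is the step I expect to be the main obstacle, since it is precisely where the exact definition of a full-relational oriented cycle, and a careful traversal of the string combinatorics of $A$, are genuinely used.
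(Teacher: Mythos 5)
Your treatment of the bijection and of the first bullet reaches the right conclusions by essentially the paper's route (Kalck's classification for the bijection, and $T_{\C,t}(a_tA)\cong\ol{\e}_{\t(a_t)}\ol{A}_{\C,t}$ for projectivity, which is the content of Lemma \ref{lemm:idemp}). However, a genuine misreading runs through the whole proposal: the idempotent $\epsilon_{\C,t}=\sum_{i\ne\s(a_t),\t(a_t)}\e_i$ sums only over the vertices of the cycle $\C$, not over all vertices of $\Q$ (see Examples \ref{examp:gent-quot} and \ref{examp:cycle}, where for a $3$-cycle in a nine-vertex quiver one has $\epsilon_{\C,1}=\e_3$). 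Consequently $\ol{A}_{\C,t}$ is \emph{not} ``supported on $\{u,v\}$'' --- in Example \ref{examp:gent-quot} it is supported on eight of the nine vertices and $\ol{\e}_{\t(a_1)}\ol{A}_{\C,1}$ is three-dimensional --- and $\res_{\C,t}(a_tA)$ is not the part of $a_tA$ lying outside $\{u,v\}$, but only its components at the cycle vertices of $\C$ other than $\s(a_t),\t(a_t)$. Under your reading the second bullet is simply false: in Example \ref{examp:main 1} the module $a_1A$ has two composition factors at the vertices $6,7\notin\{1,2\}$, yet $A$ is representation-finite (and indeed $\dim_\kk\res_{\C,1}(a_1A)=0$ there). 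Your argument for the first bullet happens to land on the correct module, but the phrase ``comparing the remaining relations at $v$'' is where all the work sits: what is actually needed, and what Lemma \ref{lemm:idemp} proves, is that the forward string of $a_t$ never meets a cycle vertex of $\C$ other than $\t(a_t)$, since otherwise the gentle axioms would produce a relation-free oriented cycle, contradicting finite-dimensionality.

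For the second bullet you explicitly leave the decisive step --- splicing the branch onto $\C$ to produce a band --- as an acknowledged obstacle, so the proposal is incomplete there even after the hypothesis is read correctly. The paper's route is more local and avoids any splicing: Lemma \ref{lemm:cycle}(2) observes that $\dim_\kk a_tA\e_{t+2}>1$ forces, besides $a_{t+1}$, a second nonzero path $p'$ from $\t(a_t)$ to $\t(a_{t+1})$, and the two parallel paths $p'$ and $a_{t+1}$ already bound a relation-free cycle of Euclidean type $\w{\mathbb{A}}$ (a band), whence representation-infiniteness. If you want to complete your version, that two-parallel-paths configuration is the one to aim for, rather than a global band through $\C$.
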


The above theorem has a non-trivial corollary as follows.

\begin{corollary}[Corollary \ref{coro:of main 1}]
If arbitrary two full-relational oriented cycles of a gentle algebra $A=\kk\Q/\I$ have no common vertex, then for any full-relational oriented cycle $\C=a_1\cdots a_{ \ell(\C)}$ and arbitrary $1\le t\le \ell(\C)$, the functor $T_{\C,t}: \modcat(A) \to \modcat(\ol{A}_{\C,t})$ of the recollement $\calR_{\C,t}$ preserves G-projectives.
\end{corollary}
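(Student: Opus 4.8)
The plan is to reduce the corollary to the first bullet of Theorem~\ref{thm:main 1} together with an explicit analysis of the functor $T_{\C,t}$ on the additive generators of $\Gproj(A)$. By \cite[Theorem 2.5]{Kal2015} (Theorem~\ref{thm:Kalck}), every indecomposable object of $\Gproj(A)$ is either projective or of the form $a_sA$ for an arrow $a_s$ lying on some full-relational oriented cycle $\C'$. Since $\Gproj(A)$ is closed under direct sums and summands, it suffices to check that $T_{\C,t}$ sends each such indecomposable to a Gorenstein-projective $\ol{A}_{\C,t}$-module, and then invoke the fact that $T_{\C,t}$, being the left adjoint appearing in the recollement $\calR_{\C,t}$, is right exact and additive, so it carries the short exact sequences $0\to \Omega G\to P\to G\to 0$ defining a complete resolution to complexes over $\ol{A}_{\C,t}$; one must verify these stay exact and that the resulting complex is a complete resolution.

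First I would record what $T_{\C,t}$ does on projectives: since $T_{\C,t}=-\otimes_A A\epsilon_{\C,t}$ followed by the obvious identification (or however $T_{\C,t}$ is defined in Section~\ref{sect:main}), it sends $e_iA$ to $e_i\epsilon_{\C,t}A\epsilon_{\C,t}$, hence to a projective $\ol{A}_{\C,t}$-module (possibly zero, when $i\in\{\s(a_t),\t(a_t)\}$). Thus $T_{\C,t}$ is exact on projectives and preserves projectivity. Next, for an indecomposable non-projective G-projective $G=a_sA$: if $\C'=\C$ and $s=t$, the first bullet of Theorem~\ref{thm:main 1} already tells us $T_{\C,t}(a_tA)$ is indecomposable projective, hence G-projective. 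If $\C'=\C$ but $s\ne t$, or if $\C'\ne\C$, then the hypothesis that distinct full-relational cycles share no vertex becomes crucial: it guarantees that $\C'$ is disjoint from $\{\s(a_t),\t(a_t)\}$ (when $\C'\ne\C$), so the whole complete resolution of $a_sA$ — which is built from the periodic ``winding'' around $\C'$ — lives on vertices untouched by the idempotent $\epsilon_{\C,t}$, and $T_{\C,t}$ restricted to the subcategory $\add\{e_iA: i\notin\{\s(a_t),\t(a_t)\}\}$ is nothing but multiplication by an idempotent that fixes those vertices, hence an equivalence onto $\add$ of the corresponding $\ol{A}_{\C,t}$-projectives. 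Therefore $T_{\C,t}$ carries the complete resolution of $a_sA$ to a complete resolution of $T_{\C,t}(a_sA)$, proving it is G-projective. The remaining case $\C'=\C$, $s\ne t$ is handled the same way once one checks that $\s(a_t)$ and $\t(a_t)$ do not meet the support of the complete resolution of $a_sA$; here I would use the explicit description of that resolution from \cite{Kal2015} (the module $a_sA$ has a $2$-periodic resolution with terms supported on the arrows of $\C$ adjacent to $a_s$), together with $\ell(\C)\ge 3$, to see that collapsing the two vertices of $a_t$ still sends this to an exact, eventually-$2$-periodic complex of $\ol{A}_{\C,t}$-projectives.

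The main obstacle will be the bookkeeping in the last case, $\C'=\C$ and $s\ne t$: here the idempotent $\epsilon_{\C,t}$ does delete vertices lying on the very cycle supporting the resolution of $a_sA$, so one cannot simply say ``$T_{\C,t}$ is an equivalence on the relevant projectives.'' One must check by hand, using the combinatorics of $\C=a_1\cdots a_{\ell(\C)}$ and the relations $a_ia_{i+1}\in\I$, that applying $-\otimes_A A\epsilon_{\C,t}A$—equivalently, restricting the string/band combinatorics to the quiver of $\ol{A}_{\C,t}$, in which $\s(a_t)$ and $\t(a_t)$ are removed—turns the $2$-periodic complex $\cdots\to e_{\s(a_{s+1})}A\to e_{\s(a_s)}A\to a_sA\to 0$ into an exact complex of $\ol{A}_{\C,t}$-projectives that is again eventually $2$-periodic, and that the dual (Hom into $\ol{A}_{\C,t}$) complex is also exact, i.e.\ one genuinely gets a complete resolution and not merely an acyclic complex. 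Once this periodicity-survives-the-quotient lemma is in place, the corollary follows by assembling the three cases over all indecomposable summands of an arbitrary $X\in\Gproj(A)$. I expect no difficulty in the disjointness reduction itself; the content is entirely in showing the quotient does not destroy the complete resolution when the deleted vertices lie on the supporting cycle, and the hypothesis of the corollary is exactly what rules out any interaction \emph{between} different cycles.
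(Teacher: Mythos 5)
There is a genuine gap, and it begins with the identification of the functor. You describe $T_{\C,t}$ as ``$-\otimes_A A\epsilon_{\C,t}$'', i.e.\ as $M\mapsto M\epsilon_{\C,t}$; that is the restriction functor $\res_{\C,t}$ of the recollement, which lands in $\modcat(\epsilon_{\C,t}A\epsilon_{\C,t})$. The functor in the corollary is the other left adjoint, $T_{\C,t}=-\otimes_A A/A\epsilon_{\C,t}A$, and (as the paper's examples make explicit) $\epsilon_{\C,t}$ is the sum of the idempotents at the vertices of $\C$ \emph{other than} $\s(a_t)$ and $\t(a_t)$; so $\ol{A}_{\C,t}$ is obtained by deleting those cycle vertices, while $\s(a_t)$ and $\t(a_t)$ are exactly the cycle vertices that survive. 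Your case analysis inverts this: you treat $\s(a_t),\t(a_t)$ as the collapsed vertices and everything else as untouched. Concretely, in the case $\C'=\C$, $s\ne t$, the arrow $a_s$ has an endpoint among the deleted vertices, so $T_{\C,t}(a_sA)=a_s\ol{A}_{\C,t}=0$, which is trivially G-projective; the ``exact, eventually $2$-periodic complex'' you propose to produce does not exist (tensoring the complete resolution with $\ol{A}_{\C,t}$ annihilates most of its terms and the resulting complex is not exact), and no periodicity-survives-the-quotient lemma is needed or available.

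Even after correcting which vertices are deleted, the mechanism ``transport the complete resolution'' has an unjustified step: $-\otimes_A\ol{A}_{\C,t}$ is only right exact, and the claim that it restricts to an equivalence on $\mathrm{add}\{e_iA: i \text{ a surviving vertex}\}$ is false in general, because the quotient kills the morphisms corresponding to paths between surviving vertices that pass through a deleted vertex; so exactness of the image complex, and of its $\Hom(-,\ol{A}_{\C,t})$-dual, would each require a separate argument. The paper's proof avoids resolutions entirely: for $\C'\ne\C$ the no-common-vertex hypothesis guarantees that the image of $\C'$ is again a full-relational oriented cycle of the gentle algebra $\ol{A}_{\C,t}$, so $T_{\C,t}(b_iA)\cong b_i\ol{A}_{\C,t}$ is a (non-projective) G-projective $\ol{A}_{\C,t}$-module directly by Kalck's theorem applied to the quotient; combined with $T_{\C,t}(a_tA)\cong\ol{\e}_{\t(a_t)}\ol{A}_{\C,t}$ (Theorem \ref{thm:main 1}(3)) and $T_{\C,t}(a_sA)=0$ for $s\ne t$, this finishes the proof. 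Re-applying Kalck's criterion to the quotient algebra is the idea your proposal is missing.
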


In the case of $A$ to be gentle one-cycle, then its bound quiver contains only one cycle $\C$. We still assume that $\C=a_1\cdots a_{\ell(\C)}$ is full-relational oriented,
then the functors $T_{\C,t}$ and $\res_{\C,t}$ of the recollement $\varphi(a_tA) = \calR_{\C,t}$ send $a_uA \in\ind(\npGproj(A))$ to an indecomposable projective module $T_{\C,t}(a_uA)\in \modcat(\ol{A}_{\C,t})$
and a restriction $\res_{\C,t}(a_uA) \in \modcat(\w{A}_{\C,t})$, respectively.
Furthermore, the embedding $\emb_{\C,t}: \modcat(\ol{A}_{\C,t}) \to \modcat(A)$ sends every $T_{\C,t}(a_uA)$ to a right $A$-module, isomorphic to zero or $a_tA$, lying in $\ind(\npGproj(A))$.
See the second main result as follows.

\begin{theorem}[Theorem \ref{thm:main 2}]
If $\C=a_1\cdots a_{\ell(\C)}$ is a unique cycle of a gentle algebra $A$, and $\C=a_1\cdots a_{\ell(\C)}$ is full-relational oriented, then the following statements hold.
\begin{itemize}
  \item[\rm(1)] $T_{\C,t}(a_uA) \cong
    \begin{cases}
      \ol{e}_{\t(a_t)}\ol{A}_{\C,t}, & \text{if } u=t; \\
      0, & \text{if } u\ne t
    \end{cases}$ holds for all $1\le u\le \ell(\C)$;
  \item[\rm(2)] for any indecomposable projective right $\ol{A}_{\C,t}$-module $P$
    satisfying $P \not\cong \ol{e}_{\t(a_t)}\ol{A}_{\C,t}$,
    we have $\emb_{\C,t}(P) \notin \ind(\npGproj(A))$;
  \item[\rm(3)] $\emb_{\C,t}(T_{\C,t}(a_uA))$ is a G-projective right $A$-module.
\end{itemize}
\end{theorem}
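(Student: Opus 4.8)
The plan is to reduce the statement to Kalck's classification of Gorenstein projectives (Theorem~\ref{thm:Kalck}) together with a completely explicit description of the quotient algebra $\ol{A}_{\C,t}$ and of the functor $T_{\C,t}$. First I would identify $\ol{A}_{\C,t}$: the idempotent $\epsilon_{\C,t}$ deletes exactly the two vertices $\s(a_t)$ and $\t(a_t)$, and since $\C$ is the \emph{unique} cycle of $A$ with $\ell(\C)\ge 3$, the only arrow of $\Q$ with both endpoints in $\{\s(a_t),\t(a_t)\}$ is $a_t$ itself (a loop, a parallel arrow or a reverse arrow would create a second cycle), while the relations $a_{t-1}a_t$ and $a_ta_{t+1}$ do not survive in the quotient because they pass through deleted vertices. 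Hence $\ol{A}_{\C,t}\cong\kk\!\left(\s(a_t)\xrightarrow{\,a_t\,}\t(a_t)\right)$ is the path algebra of $A_2$, whose indecomposable projective right modules are the two--dimensional $\ol{e}_{\s(a_t)}\ol{A}_{\C,t}$ and the simple projective $\ol{e}_{\t(a_t)}\ol{A}_{\C,t}$. I would use throughout that $T_{\C,t}=-\otimes_A\ol{A}_{\C,t}$ is the contraction functor, so $T_{\C,t}(M)=M/M\epsilon_{\C,t}A$ is the largest quotient of $M$ supported on $\{\s(a_t),\t(a_t)\}$, and that $\emb_{\C,t}$ is inflation along $A\twoheadrightarrow\ol{A}_{\C,t}$; in particular $\emb_{\C,t}(\ol{e}_{\t(a_t)}\ol{A}_{\C,t})$ is the simple $A$-module $S_{\t(a_t)}$ supported at $\t(a_t)$, on which every arrow acts by zero.

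For part~(1) I would write each $a_uA$ explicitly as a string module over $A$ from the bound quiver and compute its largest quotient supported on $\{\s(a_t),\t(a_t)\}$. When $u=t$: the top of $a_tA$ lies over $\t(a_t)$, and since $\C$ is full--relational and unique, $\mathrm{rad}(a_tA)$ is generated at a vertex outside $\{\s(a_t),\t(a_t)\}$; hence $T_{\C,t}(a_tA)=a_tA/\mathrm{rad}(a_tA)\cong\ol{e}_{\t(a_t)}\ol{A}_{\C,t}$. When $u\ne t$: tracing the string of $a_uA$ across the deleted vertices, one checks that $a_uA$ has no nonzero quotient supported on $\{\s(a_t),\t(a_t)\}$, whence $T_{\C,t}(a_uA)=0$. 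This string bookkeeping --- precisely controlling how the string of each $a_uA$ meets the retained pair of vertices once one contracts --- is the computational core of the proof and the step I expect to be the main obstacle.

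For parts~(2) and~(3) I would invoke Theorem~\ref{thm:Kalck}, which in the one--cycle case yields $\ind(\npGproj(A))=\{a_1A,\dots,a_{\ell(\C)}A\}$. For~(2): an indecomposable projective $\ol{A}_{\C,t}$-module $P$ with $P\not\cong\ol{e}_{\t(a_t)}\ol{A}_{\C,t}$ must be $\ol{e}_{\s(a_t)}\ol{A}_{\C,t}$, and $\emb_{\C,t}(\ol{e}_{\s(a_t)}\ol{A}_{\C,t})$ is the two--dimensional $A$-module on which $a_t$ is an isomorphism from its $\s(a_t)$-component onto its $\t(a_t)$-component and all other arrows act by zero; comparing tops, the only $a_uA$ that could be isomorphic to it would have $a_u$ the arrow pointing into $\s(a_t)$, i.e. $a_u=a_{t-1}$, but the generator of $a_{t-1}A$ is annihilated by $a_t$ (as $a_{t-1}a_t\in\I$) whereas that of $\emb_{\C,t}(\ol{e}_{\s(a_t)}\ol{A}_{\C,t})$ is not, so $\emb_{\C,t}(P)\notin\ind(\npGproj(A))$. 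For~(3): by~(1), $\emb_{\C,t}(T_{\C,t}(a_uA))$ is either $0$, trivially Gorenstein projective, or, when $u=t$, the simple module $S_{\t(a_t)}$; it then remains to see $S_{\t(a_t)}$ is Gorenstein projective over $A$, which I would obtain by identifying it with $a_tA$ in this situation --- equivalently, by checking that full--relationality of $\C$ forces $a_tA\cong S_{\t(a_t)}$ --- and applying Theorem~\ref{thm:Kalck}, or else by exhibiting $S_{\t(a_t)}$ directly as a syzygy in the periodic projective resolution running around $\C$. Granting~(1), parts~(2) and~(3) are then essentially formal.
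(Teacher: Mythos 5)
There is a genuine gap, and it originates in your identification of the quotient algebra. The idempotent $\epsilon_{\C,t}=\sum_{i\ne\s(a_t),\t(a_t)}\e_i$ sums only over the vertices \emph{of the cycle} $\C$ other than $\s(a_t)$ and $\t(a_t)$, not over all vertices of $\Q$: in Example \ref{examp:gent-quot} the quiver has nine vertices and yet $\epsilon_{\C,1}=\e_3$, and in Example \ref{examp:counter-examp} one has $\epsilon_{\C,2}=\e_1$ on a five-vertex quiver. Hence $\ol{A}_{\C,t}$ retains $\s(a_t)$, $\t(a_t)$ \emph{and every vertex not on $\C$}; it is not the path algebra of $A_2$. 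Everything downstream of that identification breaks. First, $\ol{\e}_{\t(a_t)}\ol{A}_{\C,t}$ is in general not simple, and $T_{\C,t}(a_tA)$ is not $a_tA/\mathrm{rad}(a_tA)$: the correct computation (Lemma \ref{lemm:idemp}) is $T_{\C,t}(a_tA)\cong a_t\ol{A}_{\C,t}\cong\ol{\e}_{\t(a_t)}\ol{A}_{\C,t}$, a module of the same dimension as $a_tA$, whose radical is supported at the off-cycle vertices that survive the quotient (compare $a_1A\cong\bigl(\begin{smallmatrix}2\\6\\7\end{smallmatrix}\bigr)$ in Example \ref{exp:gent G-proj}). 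Likewise your claim that full-relationality forces $a_tA\cong S_{\t(a_t)}$ is false whenever an arrow other than $a_{t+1}$ leaves $\t(a_t)$; the correct statement for (3) is $\emb_{\C,t}(T_{\C,t}(a_tA))\cong a_tA$, proved in the paper by matching the basis of paths of $\ol{\e}_{\t(a_t)}\ol{A}_{\C,t}$ with the paths $a_tq$ spanning $a_tA$ (injectivity uses gentleness to see $a_tb_1\notin\I$ for $b_1\ne a_{t+1}$; surjectivity uses that $\Q$ has only one cycle, so no path out of $\t(a_t)$ re-enters the deleted cycle vertices).

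The gap is most serious in part (2): since every off-cycle vertex $v$ survives, there is one indecomposable projective $\ol{\e}_v\ol{A}_{\C,t}$ for each such $v$, and your argument addresses none of them --- you only treat $P=\ol{\e}_{\s(a_t)}\ol{A}_{\C,t}$. The paper disposes of the off-cycle projectives by Lemma \ref{lemm:proj}(a) (a vertex not on any full-relational oriented cycle cannot index a module in $\ind(\npGproj(A))$, by Theorem \ref{thm:Kalck}), and handles $v=\s(a_t)$ by a separate dimension argument that derives $\ell(\C)=2$, contradicting Assumption \ref{assump}. Finally, for part (1) with $u\ne t$ the intended argument is simpler and different from your string-tracing: since $\ell(\C)\ge3$, at least one endpoint of $a_u$ is a deleted cycle vertex, so $a_u$ is already zero in $\ol{A}_{\C,t}$ and $T_{\C,t}(a_uA)=a_u\ol{A}_{\C,t}=0$. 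To repair your proof you would need to rework all three parts against the correct, generally much larger, quotient $\ol{A}_{\C,t}$.
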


\section{Preliminaries} \label{sect:pre}

In this section, we will give some terminologies and some preliminary results.

\subsection{Gentle algebras} \label{subsect:gent}

Let $\Q$ be a quiver and $\underline{\Q}$ its underlying graph. A {\defines cycle} (of length $l$) on $\Q$ is a cycle on $\underline{\Q}$, that is, it is a sequence of $l$ edges $\underline{c}_1, \ldots, \underline{c}_l$ of $\underline{\Q}$ with $n$ vertices $v_1,\ldots, v_n \in \Q_0$ such that the vertices of $\C$ can be arranged in a cyclic sequence in such a way that two vertices $v_i$ and $v_{i+1}$ are adjacent connected by the arrow $c_i$ if they are consecutive in the sequence, and are nonadjacent otherwise (the indices $i$ are taken modulo $n$ if necessary).
An {\defines oriented cycle} is a cycle $\C=a_1\cdots a_l$ with $\t(a_{i})=\s(a_{i+1})$  $(1\le i<l)$ such that $\t(\C)=\t(a_l)=\s(a_1)=\s(\C)$ holds.
Furthermore, an oriented cycle of a bound quiver $(\Q,\I)$ is said to be {\defines full-relational} if $a_1a_2$, $a_2a_3$, $\ldots$, $a_{l-1}a_l$ and $a_la_1$ lie in $\I$.

Next, we recall that a bound quiver $(\Q, \I)$ is said to be a {\defines gentle pair} if the following conditions hold:
\begin{itemize}
  \item[(1)] Each vertex in $\Q_0$ is the source of at most two arrows and the target of at most two arrows.

  \item[(2)] For each arrow $a\in\Q_1$, there is at most one arrow $b\in\Q_1$ such that $ab\notin\I$, and there is at most one arrow $c$ such that $ca\notin\I$.

  \item[(3)] For each arrow $a\in\Q_1$, there is at most one arrow $b\in\Q_1$ such that $ab\in\I$, and there is at most one arrow $c$ such that $ca\in\I$.

  \item[(4)] The admissible $\I$ of the path algebra $\kk\Q$ is generated by some paths of length two.
\end{itemize}

\begin{definition} \rm
An algebra $\kk\Q/\I$ is called a {\defines gentle algebra} if its bound quiver $(\Q,\I)$ is a gentle pair. Furthermore, a gentle algebra is said to be a {\defines gentle one-cycle algebra}, if its quiver $\Q$ has only one cycle.
\end{definition}

Gentle algebras were introduced by Assem and Skowr\`{o}nski in \cite{AS1987} as appropriate context for the study of
algebras derived equivalently to hereditary algebras of Euclidean type $\w{\mathbb{A}}$,
they are special string algebras, and all indecomposable modules over gentle algebras are described by Butler and Ringel, see \cite[Section 3, page 161]{BR1987}.

\begin{example} \rm \label{exp:gent}
The quiver $\Q$ given by the following graph (see \Pic \ref{fig:gent}) with the admissible ideal $\I=\langle a_1a_2, a_2a_3, a_3a_1, b_1c_1, b_2c_2, b_3c_3, d_1b_3, d_2b_1, d_3b_2\rangle$ is a gentle pair.
It is clear that $A=\kk\Q/\I$ is a gentle algebra whose cycle is a {full-relational} oriented cycle $a_1a_2a_3\ (= a_2a_3a_1 = a_3a_1a_2)$.
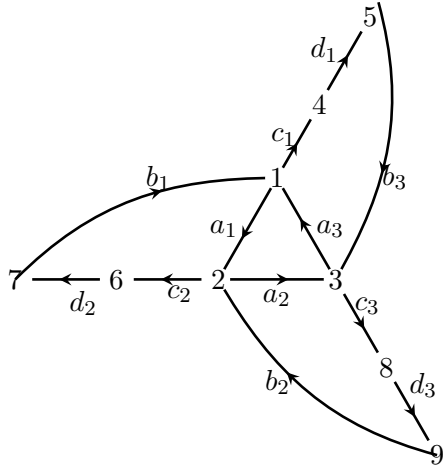
\begin{figure}[htbp]
\centering
\small
\begin{tikzpicture}[scale = 0.45]
\draw[rotate around={60:(0,2)}]
  [line width=1pt]
  [postaction={on each segment={mid arrow=black}}]
  (0,2) -- (2,2);
\draw[rotate around={60:(0,2)}]
  (2.5,2) node{$4$};
\draw[rotate around={60:(0,2)}]
  [line width=1pt]
  [postaction={on each segment={mid arrow=black}}]
  (3,2) -- (5,2);
\draw[rotate around={60:(0,2)}]
  (5.5,2) node{$5$};
\draw[line width=1pt]
  [postaction={on each segment={mid arrow=black}}]
  (-2.23,-1)--(-4.23,-1);
\draw (-4.23,-1) node[left]{$6$};
\draw[line width=1pt]
  [postaction={on each segment={mid arrow=black}}]
  (-5.23,-1)--(-7.23,-1);
\draw (-7.23,-1) node[left]{$7$};
\draw[rotate=120]
  [line width=1pt]
  [postaction={on each segment={mid arrow=black}}]
  (-2.23,-1)--(-4.23,-1);
\draw[rotate=120] (-4.73,-1) node{$8$};
\draw[rotate=120]
  [line width=1pt]
  [postaction={on each segment={mid arrow=black}}]
  (-5.23,-1)--(-7.23,-1);
\draw[rotate=120] (-7.73,-1) node{$9$};
\draw[rotate =  0][line width=1pt]
  [postaction={on each segment={mid arrow=black}}]
   (-7.73,-1) to[out=45,in=180] (0, 2);
\draw[rotate =120][line width=1pt]
  [postaction={on each segment={mid arrow=black}}]
   (-7.73,-1) to[out=45,in=180] (0, 2);
\draw[rotate =240][line width=1pt]
  [postaction={on each segment={mid arrow=black}}]
   (-7.73,-1) to[out=45,in=180] (0, 2);
\draw[rotate =  0](-3.5,2  ) node{$b_1$};
\draw[rotate =120](-3.5,2  ) node{$b_2$};
\draw[rotate =240](-3.5,2  ) node{$b_3$};
\draw[rotate =  0]( 0.2,3.2) node{$c_1$};
\draw[rotate =120]( 0.2,3.2) node{$c_2$};
\draw[rotate =240]( 0.2,3.2) node{$c_3$};
\draw[rotate =  0]( 1.4,5.8) node{$d_1$};
\draw[rotate =120]( 1.4,5.8) node{$d_2$};
\draw[rotate =240]( 1.4,5.8) node{$d_3$};
\draw[postaction={on each segment={mid arrow=black}}]
     [line width=1pt]
     ( 0.  , 2.  ) -- (-1.73,-1.  );
\draw(-0.86, 0.5 ) node[left]{$a_1$};
\draw[rotate=120][postaction={on each segment={mid arrow=black}}]
     [line width=1pt]
     ( 0.  , 2.  ) -- (-1.73,-1.  );
\draw[rotate=120](-0.86, 0.5 ) node[below]{$a_2$};
\draw[rotate=240][postaction={on each segment={mid arrow=black}}]
     [line width=1pt]
     ( 0.  , 2.  ) -- (-1.73,-1.  );
\draw[rotate=240](-0.86, 0.5 ) node[right]{$a_3$};
\fill[rotate=  0][white]( 0.  , 2.  ) circle (0.35);
\fill[rotate=120][white]( 0.  , 2.  ) circle (0.35);
\fill[rotate=240][white]( 0.  , 2.  ) circle (0.35);
\draw[rotate=  0]( 0.  , 2.  ) node{$1$};
\draw[rotate=120]( 0.  , 2.  ) node{$2$};
\draw[rotate=240]( 0.  , 2.  ) node{$3$};
\end{tikzpicture}
\caption{The quiver of gentle algebra given in Example \ref{exp:gent}.}
\label{fig:gent}
\end{figure}
\end{example}

\subsection{G-projective modules}

Let $A$ be a finite-dimensional algebra.
A right $A$-module $G\in\modcat(A)$ is called {\it Gorenstein projective} ({\it G-projective}, for short)
if there is an exact sequence of projective right $A$-modules
\[\cdots \longrightarrow P^{-2}
\mathop{\longrightarrow}\limits^{d^{-2}} P^{-1} \mathop{\longrightarrow}\limits^{d^{-1}} P^{0}
\mathop{\longrightarrow}\limits^{d^{0}}  P^{1}  \mathop{\longrightarrow}\limits^{d^{1}}
P^{2} \longrightarrow \cdots \]
in $\modcat A$ which remains exact after applying the functor $\Hom_A(-,A)$,
such that $G\cong \mathrm{Im}d^{-1}$ \cite{AB1969, EJ1995}.
Obviously, every projective right $A$-module is G-projective.
We use $\Gproj(A)$ to denote the subcategory of $\modcat(A)$ consisting of all G-projective right $A$-modules,
and $\npGproj(A)$ to denote the subcategory of $\modcat(A)$ consisting of all non-projective G-projective right $A$-modules.
In \cite{Kal2015}, Kalck described the G-projective modules over gentle algebras.

\begin{theorem}[\!\!{\cite[Theorem 2.5]{Kal2015}}] \label{thm:Kalck}
Let $A=\kk\Q/\I$ be a gentle algebra. Then any indecomposable right $A$-module $G$ is G-projective if and only if $G$ is isomorphic to either $eA$
{\rm(}$e$ is a primitive idempotent of $A${\rm)} or $aA$ {\rm(}$a$ is an arbitrary arrow on any full-relational oriented cycle{\rm)}.
In particular, all $aA$ are indecomposable non-projective.
\end{theorem}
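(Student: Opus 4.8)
The plan rests on three ingredients. First, it is known (Gei{\ss}--Reiten) that every gentle algebra $A$ is Iwanaga--Gorenstein; hence a finitely generated $A$-module $M$ is G-projective if and only if $\Ext^{i}_{A}(M,A)=0$ for all $i\geq1$, and $\Gproj(A)$ is closed under syzygies, cosyzygies and direct summands. Second, an explicit syzygy computation for ``arrow modules'': for $a\in\Q_{1}$ the projective cover of $aA$ is $e_{\mathfrak{t}(a)}A$ (both have top $S_{\mathfrak{t}(a)}$), and by listing the nonzero paths starting at $\mathfrak{t}(a)$ and checking which ones are killed by left multiplication by $a$ --- where the gentleness axioms pin everything down --- one obtains $\Omega(aA)=\Ker(e_{\mathfrak{t}(a)}A\twoheadrightarrow aA)=\sigma(a)A$, with $\sigma(a)$ the necessarily unique arrow $b$ satisfying $\mathfrak{s}(b)=\mathfrak{t}(a)$ and $ab\in\I$; here $aA$ is projective (so $\Omega(aA)=0$) exactly when no such $b$ exists. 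Third, the combinatorics of $\sigma$: both $\sigma$ and its inverse partial map are injective on $\Q_{1}$ (by condition (3) of a gentle pair), so the $\sigma$-orbit of an arrow is either eventually undefined or purely periodic, and a purely periodic orbit is precisely a full-relational oriented cycle. Combining these: along such a cycle $\C=a_{1}\cdots a_{\ell}$ one has $\sigma(a_i)=a_{i+1}$, hence $\Omega(a_{i}A)\cong a_{i+1}A$ and $\Omega^{\ell}(a_{i}A)\cong a_{i}A$ (indices modulo $\ell$); each $a_{i}A$ has simple top, so is indecomposable, and is non-projective because a projective module has vanishing syzygy while $\Omega(a_iA)=a_{i+1}A\neq0$. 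This already yields the ``in particular'' clause.

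For the ``if'' direction, $eA$ is projective, hence G-projective, and for $a$ on a full-relational oriented cycle the module $aA$ is $\Omega$-periodic by the above. It therefore suffices to note that over a Gorenstein algebra every $\Omega$-periodic module $M$ --- say $\Omega^{p}M\cong M$ with $p\geq1$ --- is G-projective: otherwise $n:=\mathrm{G}\text{-}\dim_{A}M$ is finite (as $A$ is Gorenstein) and positive (as $M$ is not G-projective), so $\Ext^{n}_{A}(M,A)\neq0$ while $\Ext^{j}_{A}(M,A)=0$ for $j>n$; but dimension shifting along the periodic resolution gives $\Ext^{n}_{A}(M,A)\cong\Ext^{n+p}_{A}(M,A)=0$, a contradiction.

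For the ``only if'' direction, let $M$ be indecomposable, non-projective and G-projective. Then $M$ is (isomorphic to) the syzygy $\Omega(N)$ of some module $N$: take $N=\Omega^{-1}M$, the cosyzygy, which is again G-projective, and since $M$ is indecomposable and non-projective $N$ may be taken indecomposable. The crucial structural fact is that over a gentle algebra the syzygy of any module is a direct sum of ``path modules'' $qA$ ($q$ a possibly trivial path), and each $qA$ is either projective ($q$ trivial) or isomorphic to an arrow module $cA$ with $c$ the last arrow of $q$; this follows from $\mathrm{rad}(e_{v}A)=\bigoplus_{\mathfrak{s}(a)=v}aA$ (each $aA$ uniserial, with all submodules again path modules) together with the string-module combinatorics of projective covers, using the Butler--Ringel description of the indecomposables over a string algebra \cite{BR1987}. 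Hence $M\cong\Omega(N)$ is a direct sum of projectives and arrow modules; being indecomposable and non-projective, $M\cong aA$ for a single arrow $a$. By the syzygy formula $\Omega(aA)=\sigma(a)A$, which is G-projective, hence (again since projectives have vanishing syzygy) non-projective, so $\sigma^{2}(a)$ is defined; iterating, $\sigma^{k}(a)$ is defined for every $k\geq1$, and the orbit analysis then forces $a$ onto a full-relational oriented cycle, which finishes the proof.

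The step I expect to be the main obstacle is precisely the structural fact invoked in the last paragraph --- that over a gentle algebra the syzygy of an arbitrary (indecomposable) module decomposes as a direct sum of path modules. This is the substantive combinatorial input and relies on the full string-module machinery (projective covers, and syzygies of string and band modules); everything else is bookkeeping with the gentleness axioms plus standard Gorenstein homological algebra. Alternatively one could derive the ``only if'' direction from Kalck's computation of the singularity category $\underline{\Gproj}(A)\simeq D_{\mathrm{sg}}(A)$ \cite{Kal2015}, but that computation is itself the heart of the matter.
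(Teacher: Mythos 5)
First, note that the paper does not prove this statement at all: it is quoted verbatim as \cite[Theorem 2.5]{Kal2015}, so there is no in-paper proof to compare against. Your reconstruction is essentially Kalck's own argument (and the route later generalized by Chen--Shen--Zhou to monomial algebras via perfect paths): the syzygy formula $\Omega(aA)=\sigma(a)A$ with $\sigma$ the partial injection on $\Q_1$ determined by the relations, the identification of purely periodic $\sigma$-orbits with full-relational oriented cycles, the Gorenstein-ness of gentle algebras to get the ``if'' direction from $\Omega$-periodicity, and the reduction of the ``only if'' direction to the fact that first syzygies decompose into path modules. All of these steps are correct in substance.

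The one genuine gap is the step you yourself flag: the claim that $\Omega(N)$ is a direct sum of path modules for arbitrary $N$. Your proposed justification --- $\mathrm{rad}(e_vA)=\bigoplus_{\mathfrak{s}(a)=v}aA$ with each $aA$ uniserial --- does not suffice as stated, because $\Omega(N)$ is only a submodule of a direct sum of such radicals, and a submodule of a direct sum of uniserial modules need not itself decompose into submodules of the summands (diagonal submodules are the obvious obstruction); one must either invoke Huisgen-Zimmermann's syzygy theorem for monomial relation algebras or carry out the explicit computation of projective covers and kernels for string and band modules, which is precisely the content of Kalck's Lemma 2.3. Since the theorem is a citation in this paper, leaving that lemma as a black box is defensible, but it is not ``bookkeeping.'' A second, easily repaired slip: in the ``only if'' direction you assert that $\sigma(a)A=\Omega(aA)$ is non-projective ``since projectives have vanishing syzygy,'' which does not parse as written; the correct argument is that if some $\sigma^k(a)A$ were projective then $aA$ would have finite projective dimension, and a G-projective module of finite projective dimension is projective, contradicting the non-projectivity of $aA$. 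With those two points repaired (or properly delegated to the literature), the proof is complete and agrees with the standard one.
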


\begin{remark} \rm
(1) In \cite{CSZ2018}, Chen-Shen-Zhou extended Kalck's results to monomial algebras and showed that the G-projective modules over a monomial algebra $\alg$ is of the form $p\alg$, where $p$, say a perfect path, is a special path on some special oriented cycle on the bound quiver of $\alg$.

(2) For a gentle algebra $A$, it is clear that $\ind(\npGproj(A)) \ne \varnothing$ if and only if its bound quiver has at least one full-relational oriented cycle, see, for example, \cite{Kal2015, LGH2024}.
\end{remark}

\begin{example} \label{exp:gent G-proj} \rm
Consider the gentle algebra given in Example \ref{exp:gent}. There are three indecomposable and non-projective G-projective right $A$-modules
\[
a_1A \cong \left(
\begin{smallmatrix}
2\\
6\\
7
\end{smallmatrix}
\right),
a_2A \cong \left(
\begin{smallmatrix}
3\\
8\\
9
\end{smallmatrix}
\right),
\text{ and }
a_3A \cong \left(
\begin{smallmatrix}
1\\
4\\
5
\end{smallmatrix}
\right). \
\]
\end{example}

\subsection{ Recollements}
We recall the notion of recollements of Abelian categories.
\begin{definition}[\!\!{\cite{FP2004}}]
\label{def-2.1} \rm
A recollement, denoted by $\calR(\mathcal{A},\mathcal{B},\mathcal{C})$, of Abelian categories is a diagram
\begin{align} \label{recollement diagram}
\xymatrix@C=2cm{
  \mathcal{A}
  \ar[r]^{i}
& \mathcal{B}
  \ar[r]^{e}
  \ar@/_1.5pc/[l]_{q}
  \ar@/^1.5pc/[l]^{p}
& \mathcal{C},
  \ar@/_1.5pc/[l]_{l}
  \ar@/^1.5pc/[l]^{r}
}
\end{align}
of Abelian categories and additive functors such that
\begin{itemize}
  \item[(1)] $(q,i)$, $(i,p)$, $(l,e)$, $(e,r)$ are adjoint pairs;
  \item[(2)] the functors $i$, $l$, and $r$ are fully faithful;
  \item[(3)] $\mathrm{Im}(i)=\mathrm{Ker}(e)$.
\end{itemize}
\end{definition}

The following example is widely studied, which plays a crucial role in the sequel.

\begin{example}\label{thm:recollement} \rm
(\!\!\cite[Example 2.7]{Psa2014})
Let $\e$ be an idempotent of an algebra $A$. Then we have a recollement of module categories:
\begin{align}\label{recollement}
\xymatrix@C=2cm{
  \modcat(A/A\e A)
  \ar[r]^{\emb}_{\mathrm{embedding}}
& \modcat(A)
  \ar[r]^{(-)\e}_{\text{retraction}}
  \ar@/_1.5pc/[l]_{-\otimes_A A/A\e A}
  \ar@/^1.5pc/[l]^{\Hom_A(A/A\e A,-)}
& \modcat(\e A \e),
  \ar@/_1.5pc/[l]_{-\otimes_{\e A\e} \e A}
  \ar@/^1.5pc/[l]^{\Hom_{\e A\e}(A\e,-)}
}
\end{align}
where $\emb$ is an embedding functor.

\end{example}

%

\section{ The idempotents on oriented cycles}

In this section, we assume that the following assumption holds.

\begin{assumption} \label{assump} \rm
All finite-dimensional algebras we considered in this section are gentle algebras whose all full-relational oriented cycles are cycles of length $\ge 3$.
\end{assumption}

For a gentle pair $(\Q,\I)$ with a full-relational oriented cycle $\C=a_1a_2\cdots a_l$ ($\s(a_i)=i$, $1\le i\le l$, $\t(a_l)=1=\s(a_1)$), we define
\[\epsilon_{\C,t} = \sum_{i\ne \s(a_t),\t(a_t)} \e_i \text{ and } \ol{A}_{\C,t}=A/A\epsilon_{\C,t}A, \]
where, for any vertex $v\in\Q_0$, $\e_v$ is the idempotent corresponded by $v$.
Notice that $\ol{A}_{\C,t}$ is both a left $A$-module and right $\ol{A}_{\C,t}$-module (the left $A$-action $A\times \ol{A}_{\C,t} \to \ol{A}_{\C,t}$ is defined by
$(a, x+A\epsilon_{\C,t}A) \mapsto ax+A\epsilon_{\C,t}A$.).


\subsection{The quotient {\texorpdfstring{$\ol{A}_{\C,t}$} .}} \label{subsect:quotient}

For any element $x$ in $A$, we use $\ol{x}$ to represent the image of $x$ under the canonical epimorphism $A \to \ol{A}_{\C,t} = A/A\epsilon_{\C,t}A$,
and, without causing confusion, $p\ol{A}_{\C,t}$ is the right $\ol{A}_{\C,t}$-module $\ol{p}\ol{A}_{\C,t}$ for any path $p$ of length $\ge 1$ on the quiver of $A$.
The following lemma shows that $a_tA\otimes_A \ol{A}_{\C,t}$ is an indecomposable projective right $\ol{A}_{\C,t}$-module.

\begin{lemma} \label{lemm:idemp}
Let $A=\kk\Q/\I$ be a gentle algebra.
Then, for a full-relational oriented cycle $\C=a_1a_2\cdots a_l$ of $A$,
the tensor product $a_tA\otimes_A \ol{A}_{\C,t}$ is an indecomposable projective right $\ol{A}_{\C,t}$-module.
\end{lemma}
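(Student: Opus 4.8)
The plan is to compute $a_tA \otimes_A \ol{A}_{\C,t}$ explicitly as a right $\ol{A}_{\C,t}$-module and identify it with a principal indecomposable projective. First I would recall that $a_tA$ is the right ideal of $A$ generated by the arrow $a_t$; as a $\kk$-space it has a basis consisting of all paths $p$ with $p = a_t p'$ (equivalently, all nonzero paths of $A$ starting with the arrow $a_t$). The tensor functor $-\otimes_A \ol{A}_{\C,t}$ applied to the inclusion $a_tA \hookrightarrow A$, together with right-exactness of tensor and the identification $A \otimes_A \ol{A}_{\C,t} \cong \ol{A}_{\C,t}$, shows that $a_tA \otimes_A \ol{A}_{\C,t}$ is the image of the map $\ol{A}_{\C,t} \to \ol{A}_{\C,t}$ given by left multiplication by $\ol{a_t}$; that is, $a_tA \otimes_A \ol{A}_{\C,t} \cong \ol{a_t}\,\ol{A}_{\C,t}$ as right $\ol{A}_{\C,t}$-modules. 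So the task reduces to showing $\ol{a_t}\,\ol{A}_{\C,t}$ is indecomposable projective over $\ol{A}_{\C,t}$.

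Next I would analyze $\ol{A}_{\C,t} = A / A\epsilon_{\C,t} A$ combinatorially. Since $\epsilon_{\C,t} = \sum_{i \ne \s(a_t), \t(a_t)} \e_i$, the two-sided ideal $A\epsilon_{\C,t}A$ is spanned by all paths passing through a vertex other than $\s(a_t)$ or $\t(a_t)$, so $\ol{A}_{\C,t}$ has a basis given by (images of) paths all of whose vertices lie in $\{\s(a_t), \t(a_t)\}$. Because $(\Q,\I)$ is gentle and $\C$ is a full-relational oriented cycle of length $\ell(\C) \ge 3$, the vertices $\s(a_t)$ and $\t(a_t)$ are distinct, and the only arrows between them that survive are $a_t$ itself (from $\s(a_t)$ to $\t(a_t)$) and possibly arrows in the opposite direction or loops — but the gentle conditions (at most two arrows in/out of a vertex, the relation structure of $\C$) force the surviving quiver of $\ol{A}_{\C,t}$ at these two vertices to be extremely small. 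In fact the key point is that any path of length $\ge 2$ starting at $\s(a_t)$ and staying in $\{\s(a_t),\t(a_t)\}$ would have to begin $a_t a_{t+1}\cdots$ or use a return arrow; the relation $a_t a_{t+1} \in \I$ (part of full-relationality) and the gentle axioms kill all such continuations, so $\ol{e}_{\t(a_t)}\ol{A}_{\C,t}$ and $\ol{e}_{\s(a_t)}\ol{A}_{\C,t}$ are each spanned by at most the trivial path and one arrow. I would then verify that $\ol{a_t}\,\ol{A}_{\C,t} = \ol{e}_{\t(a_t)}\ol{A}_{\C,t}$ (multiplication by $\ol{a_t}$ sends the top of $\ol{e}_{\s(a_t)}\ol{A}_{\C,t}$ isomorphically onto the socle/all of $\ol{e}_{\t(a_t)}\ol{A}_{\C,t}$, since the only relations involving $a_t$ in $A$ are $a_{t-1}a_t$ and $a_ta_{t+1}$, neither of which is a left-multiplication obstruction here), hence it is the indecomposable projective $\ol{A}_{\C,t}$-module at the vertex $\t(a_t)$, which is indecomposable because $\ol{A}_{\C,t}$ is a bound quiver algebra and principal projectives over such algebras are indecomposable (they have simple top).

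I expect the main obstacle to be the careful bookkeeping in the second paragraph: one must rule out the possibility that extra arrows or extra surviving paths between $\s(a_t)$ and $\t(a_t)$ make $\ol{e}_{\t(a_t)}\ol{A}_{\C,t}$ larger or decomposable, and this requires invoking the gentle axioms (1)–(3) together with full-relationality of $\C$ and the length hypothesis $\ell(\C) \ge 3$ (Assumption \ref{assump}) in a genuinely case-based way — e.g. handling whether there is a second arrow into $\t(a_t)$ or out of $\s(a_t)$, and whether such an arrow lies on another full-relational cycle. A clean way to organize this is to first show $\ol{A}_{\C,t}$ is itself a gentle algebra (or at least a string/Nakayama-type algebra on the restricted quiver), so that its indecomposable projectives are transparently described, and then simply match $a_tA\otimes_A\ol{A}_{\C,t}$ with the projective at $\t(a_t)$. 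The identification $a_tA\otimes_A\ol{A}_{\C,t}\cong\ol{e}_{\t(a_t)}\ol{A}_{\C,t}$ then also sets up the explicit formula $T_{\C,t}(a_tA)\cong \ol{e}_{\t(a_t)}\ol{A}_{\C,t}$ needed later in Theorem \ref{thm:main 2}(1).
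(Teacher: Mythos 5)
Your overall strategy coincides with the paper's: reduce to $a_t\ol{A}_{\C,t}$ and identify it with the indecomposable projective $\ol{\e}_{\t(a_t)}\ol{A}_{\C,t}$. But the combinatorial core of your argument rests on a misreading of $\epsilon_{\C,t}$: the index $i$ in $\epsilon_{\C,t}=\sum_{i\ne\s(a_t),\t(a_t)}\e_i$ ranges only over the vertices of the cycle $\C$, not over all of $\Q_0$ (this is forced by Example \ref{examp:gent-quot}, where $\epsilon_{\C,1}=\e_3$ for a nine-vertex quiver, and by Lemma \ref{lemm:cycle}). Consequently $\ol{A}_{\C,t}$ is \emph{not} supported on the two vertices $\s(a_t),\t(a_t)$; it keeps every vertex of $\Q$ off the cycle, and $\ol{\e}_{\t(a_t)}\ol{A}_{\C,t}$ can be large (in Example \ref{examp:gent-quot} it is three-dimensional, with composition factors at vertices $2$, $6$, $7$). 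So your claims that the surviving quiver of $\ol{A}_{\C,t}$ at these two vertices is ``extremely small'' and that the projectives there are ``spanned by at most the trivial path and one arrow'' are false, and the case analysis you build on them does not establish the lemma.

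Under the correct definition the argument must compare the basis $\{a_tp\}$ of $a_t\ol{A}_{\C,t}$ (where $p$ avoids the deleted cycle vertices and $a_tp\ne0$) with the basis $\{q\}$ of $\ol{\e}_{\t(a_t)}\ol{A}_{\C,t}$ (nonzero paths of $\ol{A}_{\C,t}$ starting at $\t(a_t)$); these paths may wander arbitrarily far from the cycle. The two genuinely nontrivial points are: (i) a path $p$ with $a_tp\ne0$ in $\ol{A}_{\C,t}$ cannot return to $\s(a_t)$ --- its last arrow $\alpha_n$ would satisfy $\alpha_na_t\notin\I$ (since $a_{t-1}a_t\in\I$ and gentleness allows only one such relation), producing a relation-free oriented cycle and contradicting finite-dimensionality; and (ii) conversely every nonzero $q$ with $\s(q)=\t(a_t)$ surviving in $\ol{A}_{\C,t}$ satisfies $a_tq\ne0$, because the unique arrow $b$ with $a_tb\in\I$ is $a_{t+1}$, and $a_{t+1}$ vanishes in $\ol{A}_{\C,t}$ since its target $\t(a_{t+1})$ is a deleted cycle vertex (here $\ell(\C)\ge3$ from Assumption \ref{assump} is used). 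Your proposal gestures at (ii) but omits (i) entirely, and neither is accessible from the truncated two-vertex picture you set up. A minor further point: $a_tA\otimes_A\ol{A}_{\C,t}$ is canonically $a_tA/a_tA\epsilon_{\C,t}A$, and identifying this with the image $a_t\ol{A}_{\C,t}\cong a_tA/(a_tA\cap A\epsilon_{\C,t}A)$ is an identification to be checked, not a formal consequence of right-exactness.
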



\begin{proof}
Notice that $a_tA\otimes_A \ol{A}_{\C,t} \cong a_t\ol{A}_{\C,t}$.
Next, we show that $a_t\ol{A}_{\C,t}$ is isomorphic to the indecomposable projective right $\ol{A}_{\C,t}$-module corresponded by the vertex $\t(a_t)$ of the quiver of $\ol{A}_{\C,t}$,
that is, we show $a_t\ol{A}_{\C,t} \cong \ol{\e}_{\t(a_t)}\ol{A}_{\C,t}$ in this proof.

First of all, we have
\begin{align}\label{formula:sum 1}
  a_t\ol{A}_{\C,t}
= a_t
  \bigoplus_{
    \begin{smallmatrix}
      {p\in\Q_s} \\ {s\in\NN}
    \end{smallmatrix}
    } \kk p
\mathop{=\!=}\limits^{(\star)}
  \bigoplus_{
    \begin{smallmatrix}
      p \text{ does not cross } \\ {1,\ldots,t, t+2, \ldots, l}
    \end{smallmatrix}
    } \kk a_t p
\end{align}
and
\begin{align}\label{formula:sum 2}
  \ol{\e}_{\t(a_t)}\ol{A}_{\C,t}
= \ol{\e}_{\t(a_t)}
  \bigoplus_{
    \begin{smallmatrix}
      {q\in\Q_s} \\ {s\in\NN}
    \end{smallmatrix}
    } \kk q
= \bigoplus_{\s(q)=\t(a_t)} \kk q,
\end{align}
where ``$\oplus$'' is a direct sum of $\kk$-linear spaces, and $(\star)$ holds by the following reasons:
\begin{itemize}
  \item[(a)] It is trivial that $p$ does not cross $1,\ldots,t-1,t+2,\ldots,l$
    by $\ol{A}_{\C,t}=A/A\epsilon_{\C,t} A$ and the definition of $\epsilon_{\C,t}$.
  \item[(b)] If $t$ is a vertex on $p$, then $p$ has a subpath $\wp=\alpha_1\cdots\alpha_n$ ($\alpha_1, \cdots, \alpha_n \in \Q_1$) such that $a_t\wp=a_t\alpha_1\cdots\alpha_n$ is an oriented cycle.
    Thus, $a_{t-1}$ and $\alpha_n$ are two arrows with $\t(a_{t-1})=\t(\alpha_n)=\s(a_t)$.
    By the definition of gentle pair and underlying Assumption \ref{assump}, $a_{t-1}a_t=0$ yields that $\alpha_na_t \ne 0$.
    Then we obtain that $\alpha_t\wp$ is an oriented cycle without relation.
    This is a contradiction since $A$ is a finite-dimensional $\kk$-algebra.
\end{itemize}

Now, we show that the set $X_1$ of all direct summands of (\ref{formula:sum 1}) one-to-one corresponds to the set $X_2$ of all direct summands of (\ref{formula:sum 2}).
On the one hand, for arbitrary $\kk a_tp \in X_1$, the starting point of $p$ is $\t(a_t)$.
It follows that $\kk p$ is a direct summand of $\kk p \in X_2$.
Conversely, for any element $\kk q\in X_2$, the path $q$ does not cross $1,\ldots, t, t+2, \ldots, l$ since the images of $\e_{1}, \ldots, \e_{t}, \e_{t+2}, \ldots, \e_l$ under the canonical epimorphism $A \to \ol{A}_{\C,t} = A/A\epsilon_{\C,t} A$ equal zero.
Thus, one can check that there is a bijection between $X_1$ and $X_2$ given by $h: a_tp\mapsto p$.
Furthermore, the above bijection induces a $\kk$-linear isomorphism
\begin{center}
$h: a_t\ol{A}_{\C,t} \To{\cong} \ol{\e}_{\t(a_t)}\ol{A}_{\C,t}$.
\end{center}
On the other hand, the $\kk$-linear isomorphism $h$ is an $A$-homomorphism since $h(\kk a_tp r) = h(\kk a_t(pr)) = \kk (pr) = (\kk p)r = h(\kk a_t p) r$ holds for all paths $r$ on the quiver of $\ol{A}_{\C, t}$.
Therefore, we have $a_t\ol{A}_{\C,t} \cong \ol{\e}_{\t(a_t)}\ol{A}_{\C,t}$ as required.
\end{proof}

Let $A=\kk\Q/\I$ be a gentle algebra.
We use $\ol{\Q}$ and $\ol{\I}$ to denote the quiver and the admissible ideal of $\ol{A}_{\C,t}$ respectively, that is
$\ol{A}_{\C,t} = \kk\ol{\Q}/\ol{\I}$.
Now, we provide an instance for Lemma \ref{lemm:idemp}.

\begin{example} \rm \label{examp:gent-quot}
Consider the gentle algebra $A=\kk\Q/\I$ given in Example \ref{exp:gent}, it has an oriented cycle $\C=a_1a_2a_3$.
Taking $t=1$, then $\epsilon_{\C,1} = \e_3$. So we have $\ol{A}_{\C,1} = A/A\e_3A = \kk\ol{\Q}/\ol{\I}$ whose quiver $\ol{\Q}$ is shown in \Pic \ref{fig:gent quotient}, and the admissible ideal $\ol{\I}$ is $\langle b_1c_1, b_2c_2, d_2b_1, d_3b_2 \rangle$.
\begin{figure}[htbp]
\centering
\small
\begin{tikzpicture}[scale = 0.45]
\draw[rotate around={60:(0,2)}]
  [line width=1pt]
  [postaction={on each segment={mid arrow=black}}]
  (0,2) -- (2,2);
\draw[rotate around={60:(0,2)}]
  (2.5,2) node{$4$};
\draw[rotate around={60:(0,2)}]
  [line width=1pt]
  [postaction={on each segment={mid arrow=black}}]
  (3,2) -- (5,2);
\draw[rotate around={60:(0,2)}]
  (5.5,2) node{$5$};
\draw[line width=1pt]
  [postaction={on each segment={mid arrow=black}}]
  (-2.23,-1)--(-4.23,-1);
\draw (-4.23,-1) node[left]{$6$};
\draw[line width=1pt]
  [postaction={on each segment={mid arrow=black}}]
  (-5.23,-1)--(-7.23,-1);
\draw (-7.23,-1) node[left]{$7$};
%
\draw[rotate=120] (-4.73,-1) node{$8$};
\draw[rotate=120]
  [line width=1pt]
  [postaction={on each segment={mid arrow=black}}]
  (-5.23,-1)--(-7.23,-1);
\draw[rotate=120] (-7.73,-1) node{$9$};
\draw[rotate =  0][line width=1pt]
  [postaction={on each segment={mid arrow=black}}]
   (-7.73,-1) to[out=45,in=180] (0, 2);
\draw[rotate =120][line width=1pt]
  [postaction={on each segment={mid arrow=black}}]
   (-7.73,-1) to[out=45,in=180] (0, 2);
%
\draw[rotate =  0](-3.5,2  ) node{$b_1$};
\draw[rotate =120](-3.5,2  ) node{$b_2$};
\draw[rotate =  0]( 0.2,3.2) node{$c_1$};
\draw[rotate =120]( 0.2,3.2) node{$c_2$};
\draw[rotate =  0]( 1.4,5.8) node{$d_1$};
\draw[rotate =120]( 1.4,5.8) node{$d_2$};
\draw[rotate =240]( 1.4,5.8) node{$d_3$};
\draw[postaction={on each segment={mid arrow=black}}]
     [line width=1pt]
     ( 0.  , 2.  ) -- (-1.73,-1.  );
\draw(-0.86, 0.5 ) node[left]{$a_1$};
\fill[rotate=  0][white]( 0.  , 2.  ) circle (0.35);
\fill[rotate=120][white]( 0.  , 2.  ) circle (0.35);
\fill[rotate=240][white]( 0.  , 2.  ) circle (0.35);
\draw[rotate=  0]( 0.  , 2.  ) node{$1$};
\draw[rotate=120]( 0.  , 2.  ) node{$2$};
\end{tikzpicture}
\caption{The quiver of the quotient $\ol{A}_{\C,1}$.}
\label{fig:gent quotient}
\end{figure}
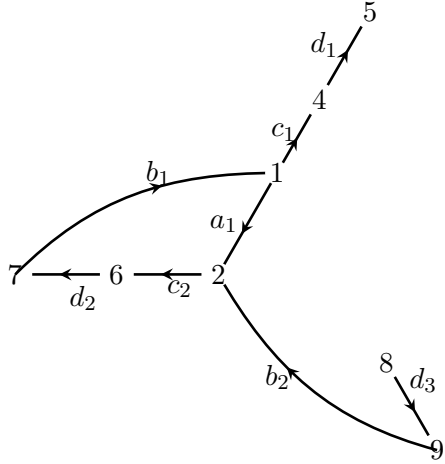
The non-projective indecomposable G-projective right $A$-module
$a_1A \cong \left(
\begin{smallmatrix}
2\\
6\\
7
\end{smallmatrix}
\right)_A$
(see Example \ref{exp:gent G-proj}) corresponds to an indecomposable projective right $\ol{A}_{\C,1}$-module $\ol{\e}_2\ol{A}_{\C,1} $ by the following:
\begin{align*}
  -\otimes_A \ol{A}_{\C,1}:
  \modcat(A)
& \to \modcat(\ol{A}_{\C,1}), \\
  a_1A
& \mapsto a_1A\otimes_A \ol{A}_{\C,1} \\
& \cong a_1\ol{A}_{\C,1} \\
& = \kk a_1 \oplus \kk a_1c_2 \oplus \kk a_1c_2d_2 \\
& \cong
  {
    \left(
    \begin{smallmatrix}
     2\\
     6\\
     7
    \end{smallmatrix}
    \right)_{\ol{A}_{\C,1}}
  } \\
& \cong \ol{\e}_2\ol{A}_{\C,1}.
\end{align*}
\end{example}

\begin{lemma} \label{lemm:proj}
Let $A=\kk\Q/\I$ be a gentle algebra with a full-relational oriented cycle
$\C=a_1a_2\cdots a_l$,
and let $P$ be an indecomposable projective right $\ol{A}_{\C,t}$-module which is not isomorphic to $\ol{\e}_{\t(a_t)}\ol{A}_{\C,t}$.
If the vertex $v\in\ol{\Q}_0$ corresponded by $P=\ol{\e}_v\ol{A}_{\C,t}$, as a vertex of the quiver $\Q$ of $A$, satisfies one of the following conditions:
\begin{itemize}
  \item[{\rm(a)}] $v$ is a vertex which does not to be on any full-relational oriented cycle;
  \item[{\rm(b)}] $v$ is a vertex on the full-relational oriented cycle $\C$ of $\Q$;
  \item[{\rm(c)}] $v$ is a vertex on the other full-relational oriented cycles $\w{\C}$ of $\Q$, and if $\C$ and $\w{\C}$ have at least one common vertex, then any common vertex is either $\s(a_t)$ or $\t(a_t)$;
\end{itemize}
then $P$, as an indecomposable right $A$-module, is not a non-projective indecomposable G-projective module.
\end{lemma}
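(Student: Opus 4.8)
The plan is to prove directly that $\emb(P)$ is not isomorphic to any module $a'A$ with $a'$ an arrow on a full-relational oriented cycle. By Theorem~\ref{thm:Kalck}, together with the fact that the embedding $\emb$ is fully faithful --- so that $\emb(P)$ is indecomposable, and it is nonzero since $v\in\ol{\Q}_0$ --- this is exactly what it means for $\emb(P)$ not to be a non-projective indecomposable G-projective $A$-module.

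First I would pin down $\emb(P)$ concretely. Put $V:=\{\,i\in\C_0 \mid i\ne\s(a_t),\t(a_t)\,\}$, the set of vertices deleted in passing from $A$ to $\ol{A}_{\C,t}$. Since $\emb$ is restriction of scalars along $A\to A/A\epsilon_{\C,t}A$, one has $\emb(P)\cong \e_vA/\e_vA\epsilon_{\C,t}A$, and $\e_vA\epsilon_{\C,t}A$ is the right ideal spanned by those paths starting at $v$ that meet $V$. Hence $\emb(P)$ is the string module whose $\kk$-basis is the set of paths from $v$ avoiding $V$; in particular $\operatorname{top}\emb(P)=S_v$ and every composition factor of $\emb(P)$ is supported at a vertex outside $V$. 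For comparison, $a'A$ is uniserial with $\operatorname{top}(a'A)=S_{\t(a')}$ and $\operatorname{rad}(a'A)\cong\gamma A$, where $\gamma$ is the unique arrow out of $\t(a')$ with $a'\gamma\notin\I$; and if $a'=a'_s$ lies on the full-relational cycle $\C'=a'_1\cdots a'_m$, then $a'_sa'_{s+1}\in\I$, so $\gamma\ne a'_{s+1}$ and the uniserial tail of $a'A$ leaves $\C'$ after one step.

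Case (a) is then immediate: if $\emb(P)\cong a'A$ the tops give $v=\t(a')$, but $\t(a')$ lies on the full-relational cycle $\C'$, contradicting the hypothesis. For cases (b) and (c), assume $\emb(P)\cong a'A$ with $a'=a'_s$ on a full-relational cycle $\C'$ and $v=\t(a')$. Writing $\operatorname{rad}\e_vA=\bigoplus_\beta \beta A$ over the at most two arrows $\beta$ out of $v$, we get $\operatorname{rad}\emb(P)=\bigoplus_\beta \beta A/(\beta A\cap\e_vA\epsilon_{\C,t}A)$, and the $\beta$-summand vanishes exactly when $\t(\beta)\in V$. As $\operatorname{rad}(a'A)$ is indecomposable, exactly one arrow $\beta_0$ out of $v$ has $\t(\beta_0)\notin V$; matching the tops of the two radicals (and noting that two distinct arrows out of $v$ have distinct targets in our situation, as one lands in $V$ and one does not) forces $\beta_0=\gamma$, hence $a'\beta_0\notin\I$, and one reads off that $\beta_0A\cap\e_vA\epsilon_{\C,t}A=0$, i.e.\ the forward string of $\beta_0$ avoids $V$ entirely. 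If $v$ had no second out-arrow, then $\dim_\kk\e_vA=\dim_\kk a'A$, which forces $\e_vA\epsilon_{\C,t}A=0$ and $\emb(P)=\e_vA$ projective --- contradicting that $a'A$ is non-projective. So $v$ has a second out-arrow $\beta_1$ with $\t(\beta_1)\in V$; since $a'\beta_0\notin\I$ and there is exactly one out-arrow $\beta$ of $v$ with $a'\beta\in\I$, that arrow is $\beta_1$, so $\beta_1$ is the continuation of $a'$ along $\C'$ and $z:=\t(\beta_1)$ lies on $\C'$. But $z\in V\subseteq\C_0$, so $z$ is a common vertex of $\C$ and $\C'$ with $z\notin\{\s(a_t),\t(a_t)\}$, which contradicts the hypothesis on common vertices in case (c); and in case (b), where $v=\s(a_t)\in\C_0$, one concludes similarly after checking that $\C'\ne\C$ (the candidate $a'=a_{t-1}$ on $\C$ itself is excluded at once, because then $\gamma$ is forced to be the second out-arrow of $\s(a_t)$ rather than $a_t$, so the two radicals have different tops) and using that a full-relational oriented cycle is uniquely reconstructed from any single corner.

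The hard part is exactly the analysis in cases (b) and (c): turning the bare isomorphism $\emb(P)\cong a'A$ into the concrete configuration above --- a ``chord'' $\beta_1$ running from $v$ into the deleted arc $V$ of $\C$, together with a second full-relational cycle $\C'$ passing through both $v$ and $z=\t(\beta_1)\in V$ --- and then excluding it. Ruling it out cleanly needs three ingredients in play at once: the gentle axioms (which match up the in-arrows and out-arrows at each vertex and so control which arrows can share a cycle), the uniqueness of a full-relational oriented cycle given a corner, and the finite-dimensionality of $A$ (no oriented cycle is relation-free), which prevents $\C'$ from merely tracing along $\C$. Managing these simultaneously, so as to force the hypothetical $\C'$ to meet $\C$ at a vertex outside $\{\s(a_t),\t(a_t)\}$ --- exactly what conditions (b) and (c) forbid --- is where the genuine content of the proof lies.
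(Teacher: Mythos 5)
Your strategy is genuinely different from the paper's: you compute $\emb(P)=\e_vA/\e_vA\epsilon_{\C,t}A$ explicitly as a string module and compare tops and radicals with a hypothetical $a'A$, whereas the paper tensors the assumed isomorphism $P\cong a'A$ down to $\ol{A}_{\C,t}$ and, in case (c), applies Kalck's theorem a second time over the quotient (where $\w{\C}$ survives as a full-relational cycle) to contradict the projectivity of $P$. Your case (a) is fine, and your case (c) is essentially sound; the only loose end there is the degenerate situation where $a'A$ is simple, so that $\gamma$ does not exist and your ``exactly one out-arrow with target outside $V$'' step is vacuous --- but then every out-arrow of $v$, in particular $a'_{s+1}$, lands in $V$, and the same forbidden common vertex appears, so this is easily patched.

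The genuine gap is in case (b). Your endgame produces a vertex $z=\t(\beta_1)\in V$ lying on both $\C$ and $\C'$ and declares this to be ``exactly what conditions (b) and (c) forbid.'' Condition (c) forbids it, but condition (b) does not: it only asserts $v\in\C_0$ and places no restriction on how a second full-relational cycle $\C'$ through $v=\s(a_t)$ may meet $\C$. After you exclude $a'=a_{t-1}$, you are left with $a'$ on some $\C'\ne\C$ passing through $\s(a_t)$ and through some $z\in V$, and you give no argument ruling this configuration out. None exists: take the gentle algebra with vertices $1,2,3,4,w$, arrows $a_1\colon 1\to 2$, $a_2\colon 2\to 3$, $a_3\colon 3\to 4$, $a_4\colon 4\to 1$, $b\colon 1\to 3$, $c\colon 3\to w$, $d\colon w\to 1$, and $\I=\langle a_1a_2,a_2a_3,a_3a_4,a_4a_1,\,bc,cd,db\rangle$. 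This is finite-dimensional and gentle, with exactly two full-relational cycles $\C=a_1a_2a_3a_4$ and $\C'=bcd$. For $t=1$ one has $\epsilon_{\C,1}=\e_3+\e_4$, the vertex $v=1=\s(a_1)$ lies on $\C$ (hypothesis (b)), and $P=\ol{\e}_1\ol{A}_{\C,1}$ has basis $\{\ol{\e}_1,\ol{a}_1\}$, hence $P\not\cong\ol{\e}_2\ol{A}_{\C,1}=S(2)$; yet as an $A$-module $P\cong({}^{1}_{2})_A\cong dA\in\ind(\npGproj(A))$ by Theorem \ref{thm:Kalck}. So case (b) of the lemma is false as stated, and your argument could not have closed it. For what it is worth, the paper's own proof has the same defect --- it silently identifies hypothesis (b) with ``the cycle carrying $a'$ equals $\C$'' --- and the lemma is only invoked later (Theorem \ref{thm:main 2}) under hypothesis (a) for one-cycle algebras, where no second cycle exists. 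To repair your case (b) you must either add the hypothesis that no full-relational cycle other than $\C$ passes through both $\s(a_t)$ and a vertex of $V$, or restrict to the gentle one-cycle setting.
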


\begin{proof}
Assume $P_A \in \npGproj(A)$, then, by Theorem \ref{thm:Kalck}, there exists a full-relational oriented cycle $\w{\C} = a_1'\cdots a_m'$ on the quiver $\Q$ of $A=\kk\Q/\I$ such that $P_A \cong a_r'A$ holds for some $1\le r\le m$. Then we have
\begin{align}\label{formula: iso 1 in lemm:proj}
  P_{\ol{A}_{\C,t}}
\cong a_r' A \otimes_A \ol{A}_{\C,t}
\cong a_r'\ol{A}_{\C,t}.
\end{align}
It follows that
\begin{align}\label{formula: iso 2 in lemm:proj}
  P_{\ol{A}_{\C,t}} \cong \ol{\e}_{\s(a_r')}\ol{A}_{\C,t} \ (=\ol{\e}_v\ol{A}_{\C,t})
\end{align}
since $P_{\ol{A}_{\C,t}}$ is an indecomposable projective right $\ol{A}_{\C,t}$-module.
The isomorphism (\ref{formula: iso 2 in lemm:proj}) yields that $\s(a_r')=v$ is a vertex on the full-relational oriented cycle $\w{\C}$, it contradicts with (a).
Therefore, if $v$ is not a vertex on any full-relational oriented cycle,
then $P_A$ is not a non-projective indecomposable G-projective module.

Next, we structure two contradictions under the conditions (b) and (c), respectively.
By (\ref{formula: iso 1 in lemm:proj}) and (\ref{formula: iso 2 in lemm:proj}) we have
\begin{align}\label{formula: iso 3 in lemm:proj}
  a_r'\ol{A}_{\C,t} \cong \ol{\e}_{\s(a_r')}\ol{A}_{\C,t}
  \ (\cong P_{\ol{A}_{\C,t}}).
\end{align}
We have two cases as follows.

\begin{itemize}
\item[(1)]
If $\w{\C}=\C$, that is, $v$ satisfies (b), then $a_r'$ is an arrow on $\C$ whose starting point is $\s(a_t)$,
i.e., $a_r'=a_t$.
Then
\[0 \ne \ol{\e}_{\s(a_t)}\ol{A}_{\C,t}\ol{\e}_{\s(a_t)}
\cong a_t\ol{A}_{\C,t}\ol{\e}_{\s(a_t)} \]
by (\ref{formula: iso 3 in lemm:proj}).
So there is at least one path $p = \beta_1\cdots\beta_u \notin \I$ from $\t(a_t)$ to $\s(a_t)$ such that $a_tp\e_{\s(a_t)} = a_tp$ is an oriented cycle on $\Q$.
Since $\C$ is a full-relational oriented cycle, we have $a_{t-1}a_t\in\I$.
Notice that $\t(\beta_u)=\t(a_{t-1})=\s(a_t)$, so $\beta_ua_t\notin\I$ by the definition of gentle algebra.
One can check that $a_t\beta_1\notin\I$.
Thus, $a_tp$ is an oriented cycle without relation, this is a contradiction since $A$ is finite-dimensional.

\item[(2)]
If $\w{\C}\ne \C$, that is, $v$ satisfies (c), then we have two subcases as follows.

\begin{itemize}
  \item[(i)] The full-relational oriented cycles $\w{\C}$ and $\C$ do not have any common vertex.
  \item[(ii)] The common vertex of $\w{\C}$ and $\C$ is either $\s(a_t)$ or $\t(a_t)$.
\end{itemize}
In any subcase, $\w{\C}$ is a full-relational oriented cycle on the quiver $\ol{\Q}$ of $\ol{A}_{\C,t}$.
Then, by (\ref{formula: iso 3 in lemm:proj}) and Theorem \ref{thm:Kalck},  we have
\[\ol{\e}_{\s(a_r')}\ol{A}_{\C,t} \cong a_r'\ol{A}_{\C,t}
  \in \ind(\npGproj(\ol{A}_{\C,t})).\]
However, $\ol{\e}_{\s(a_r')}\ol{A}_{\C,t}$ is a projective right $\ol{A}_{\C,t}$-module, a contradiction.
\end{itemize}
\end{proof}


Notice that Lemma \ref{lemm:proj} may be fail if the common vertex of $\C$ and $\w{\C}$ is neither $\s(a_t)$ nor $\t(a_t)$, see the following instance.

\begin{example} \label{examp:counter-examp} \rm
Let $A=\kk\Q/\I$ be the gentle algebra given by the bound quiver $(\Q,\I)$, where the quiver $\Q$ is shown in \Pic \ref{fig:counter-examp}(1), and the admissible ideal is $\I=\langle a_1a_2, a_2a_3,$ $a_3a_1, b_1b_2, b_2b_3, b_3b_1\rangle$.
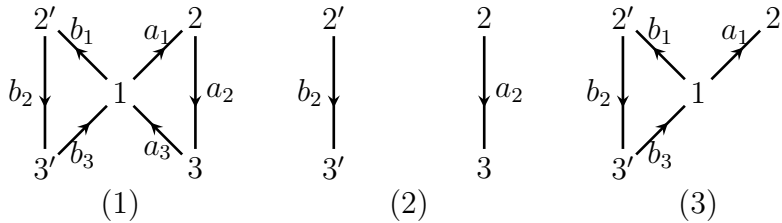
\begin{figure}[htbp]
\centering
\begin{tikzpicture}[scale=1]
\draw[postaction={on each segment={mid arrow=black}}]
  [line width = 1pt]
  ( 0, 0) -- ( 1, 1) -- ( 1,-1) -- ( 0, 0) --
  (-1, 1) -- (-1,-1) -- ( 0, 0);
\fill[white] ( 0, 0) circle(0.25);
\fill[white] ( 1, 1) circle(0.25);
\fill[white] ( 1,-1) circle(0.25);
\fill[white] (-1, 1) circle(0.25);
\fill[white] (-1,-1) circle(0.25);
\draw ( 0, 0) node{$1$};
\draw ( 1, 1) node{$2$};
\draw ( 1,-1) node{$3$};
\draw (-1, 1) node{$2'$};
\draw (-1,-1) node{$3'$};
\draw ( 0.5, 0.5) node[above]{$a_1$};
\draw ( 1.0, 0.0) node[right]{$a_2$};
\draw ( 0.5,-0.5) node[below]{$a_3$};
\draw (-0.5, 0.5) node[above]{$b_1$};
\draw (-1.0, 0.0) node[ left]{$b_2$};
\draw (-0.5,-0.5) node[below]{$b_3$};
\draw (0,-1.5) node{(1)};
\end{tikzpicture}
\ \ \
\begin{tikzpicture}[scale=1]
\draw[postaction={on each segment={mid arrow=black}}]
  [line width = 1pt] ( 1, 1) -- ( 1,-1);
\draw[postaction={on each segment={mid arrow=black}}]
  [line width = 1pt] (-1, 1) -- (-1,-1);
\fill[white] ( 0, 0) circle(0.25);
\fill[white] ( 1, 1) circle(0.25);
\fill[white] ( 1,-1) circle(0.25);
\fill[white] (-1, 1) circle(0.25);
\fill[white] (-1,-1) circle(0.25);
\draw ( 1, 1) node{$2$};
\draw ( 1,-1) node{$3$};
\draw (-1, 1) node{$2'$};
\draw (-1,-1) node{$3'$};
\draw ( 1.0, 0.0) node[right]{$a_2$};
\draw (-1.0, 0.0) node[ left]{$b_2$};
\draw (0,-1.5) node{(2)};
\end{tikzpicture}
\ \ \
\begin{tikzpicture}[scale=1]
\draw[postaction={on each segment={mid arrow=black}}]
  [line width = 1pt]
  ( 0, 0) -- ( 1, 1);
\draw[postaction={on each segment={mid arrow=black}}]
  [line width = 1pt]
  ( 0, 0) -- (-1, 1) -- (-1,-1) -- ( 0, 0);
\fill[white] ( 0, 0) circle(0.25);
\fill[white] ( 1, 1) circle(0.25);
\fill[white] ( 1,-1) circle(0.25);
\fill[white] (-1, 1) circle(0.25);
\fill[white] (-1,-1) circle(0.25);
\draw ( 0, 0) node{$1$};
\draw ( 1, 1) node{$2$};
\draw (-1, 1) node{$2'$};
\draw (-1,-1) node{$3'$};
\draw ( 0.5, 0.5) node[above]{$a_1$};
\draw (-0.5, 0.5) node[above]{$b_1$};
\draw (-1.0, 0.0) node[ left]{$b_2$};
\draw (-0.5,-0.5) node[below]{$b_3$};
\draw (0,-1.5) node{(3)};
\end{tikzpicture}
\caption{The gentle algebra $A$ given in Example \ref{examp:counter-examp} and its quotients $\ol{A}_{\C,2}$ and $\ol{A}_{\C,1}$.}
\label{fig:counter-examp}
\end{figure}
The gentle algebra $A$ has 6 non-projective indecomposable G-projective right $A$-modules (up to isomorphism) as follows.
\begin{itemize}
  \item[(a)] The simple modules $S(2)$, $S(3)$, $S(2')$, $S(3')$ corresponded by the vertices $2,3,2',3'$, respectively.
  \item[(b)] The indecomposable modules $({}_{2}^{1})$ and $({}_{2'}^{1})$.
\end{itemize}

Take $\C=a_1a_2a_3$ and $t=2$. Then $\ol{A}_{\C,2} = A/A\epsilon_{\C,2}A = A/A\e_1A$,
and the quiver of $\ol{A}_{\C,2}$ is shown in \Pic \ref{fig:counter-examp} (2).
Two full-relational oriented cycles $\C=a_1a_2a_3$ and $\w{\C}=b_1b_2b_3$ have a common vertex $1$.
The vertex $3'$ does not satisfy Lemma \ref{lemm:proj} (a), (b), or (c),
and $\ol{\e}_{3'}\ol{A}_{\C,2} = S(3')_{\ol{A}_{\C,2}}$ is an indecomposable projective right $\ol{A}_{\C,2}$-module.
However, $\ol{\e}_{3'}\ol{A}_{\C,2}$, as a right $A$-module, is isomorphic to $S(3')_A$ lying in $\npGproj(A)$.

Next, for the full-relational oriented cycle $\C$, we consider the case of $t=1$.
Then $\ol{A}_{\C,1}=A/A\epsilon_{\C,1}A=A/A\e_3A$,
its quiver is shown in \Pic \ref{fig:gent quotient}(3).
The vertexes $1,2',3'$ satisfies the condition given in Lemma \ref{lemm:proj} (a), and we have that
\begin{center}
  $P(1)_{\ol{A}_{\C,1}} = ({}_{2'}{}^{1}{}_{2})_{\ol{A}_{\C,1}}$,
  $P(2')_{\ol{A}_{\C,1}} = ({}_{3'}^{2'})_{\ol{A}_{\C,1}}$
  and $P(3')_{\ol{A}_{\C,1}} =
  \left(\begin{smallmatrix} 3' \\ 1 \\ 2 \end{smallmatrix} \right)_{\ol{A}_{\C,1}}$
\end{center}
are not non-projective indecomposable G-projective right $A$-modules.
\end{example}

Next, we provide an example for Lemma \ref{lemm:proj}.

\begin{example} \rm
We consider the gentle algebra $A$ given in Example \ref{exp:gent} and its quotient $\ol{A}_{\C,1}$ given in Example \ref{examp:gent-quot}.
Then
\[P(2)_{\ol{A}_{\C,1}} = \ol{\e}_2\ol{A}_{\C,1} =
\left(\begin{smallmatrix}
2 \\ 6 \\ 7
\end{smallmatrix}\right)_{\ol{A}_{\C,1}}
=\left(\begin{smallmatrix}
2 \\ 6 \\ 7
\end{smallmatrix}\right)_{A}
\in \ind(\npGproj(A)).\]
%
%
The vertex $2$, as a vertex in the quiver $\ol{\Q}$ of $\ol{A}_{\C,1}$,
is the ending point of $a_1$, and $P(2)_{\ol{A}_{\C,1}}$, as a right $A$-module, lies in $\ind(\npGproj(A))$.
For other indecomposable projective right $\ol{A}_{\C,1}$ modules, we obtain:
\begin{itemize}
\item[(a)]
the following indecomposable projective right $\ol{A}_{\C,1}$-modules
\begin{align*}
& P(1)_{\ol{A}_{\C,1}} = \left({\begin{smallmatrix}
    & 1 &   \\
  2 &   & 4 \\
  6 &   & 5 \\
  7 &   &
  \end{smallmatrix}}\right)_{\ol{A}_{\C,1}},
&& P(4)_{\ol{A}_{\C,1}} = \big({}^4_5\big)_{\ol{A}_{\C,1}},
&& P(8)_{\ol{A}_{\C,1}} = \big({}^8_9\big)_{\ol{A}_{\C,1}},
\\
&  P(9)_{\ol{A}_{\C,1}} = \big({}^9_2\big)_{\ol{A}_{\C,1}},
&& P(6)_{\ol{A}_{\C,1}} = \big({}^6_7\big)_{\ol{A}_{\C,1}},
&& P(7)_{\ol{A}_{\C,1}} = \big({}^7_1\big)_{\ol{A}_{\C,1}}
\end{align*}
are indecomposable projective right $A$-modules,

\item[(b)]
and the indecomposable projective $\ol{A}_{\C,1}$-module $P(5)_{\ol{A}_{\C,1}} \cong S(5)_{\ol{A}_{\C,1}}$ is not an indecomposable projective $A$-module.
\end{itemize}
All modules given in (a) and (b) do not be non-projective indecomposable G-projective $A$-modules.
\end{example}

\subsection{The subalgebra {\texorpdfstring{$\w{A}_{\C,t}$}.}}
Define $\w{A}_{\C,t} := \epsilon_{\C,t}A\epsilon_{\C,t}$. Then it is a subalgebra of $A$ whose identity is $\epsilon_{\C,t}$.

\begin{lemma} \label{lemm:cycle}
For a full-relational oriented cycle $\C=a_1a_2\cdots a_l$ of the gentle algebra $A=\kk\Q/\I$, the following two statements hold.
\begin{itemize}
  \item[{\rm(1)}] If $a_tA\epsilon_{\C,t}\ne 0$, then the number of all cycles of $\Q$ is greater than or equal to two.
  \item[{\rm(2)}] If $\dim_{\kk}a_tA\e_{t+2}>1$ {\rm(}we take $t+1=1$ in the case of $t=l${\rm)}, then $A$ is representation-infinite.
\end{itemize}
\end{lemma}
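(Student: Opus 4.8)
The plan is to analyze what the hypotheses force about paths on the quiver $\Q$, using the gentle conditions and Assumption~\ref{assump} to trace out a second oriented cycle (for (1)) or an infinite family of indecomposables (for (2)). Throughout I will use the standard description of $aA$ for a gentle algebra: as a $\kk$-space $a_tA$ has a basis consisting of the paths $a_tp$ that are nonzero in $A$, and $a_tA\e_v = \bigoplus \kk a_tp$ where the sum runs over nonzero paths $p$ from $\t(a_t)$ to $v$. I will also repeatedly use the key device already exploited in the proof of Lemma~\ref{lemm:proj}: since $\C$ is full-relational, $a_{t-1}a_t\in\I$, so at the vertex $\s(a_t)=\t(a_{t-1})$ the (at most one) other arrow $\beta$ ending there satisfies $\beta a_t\notin\I$; and likewise at $\t(a_t)$ the relation $a_ta_{t+1}\in\I$ forces the other arrow $\gamma$ starting at $\t(a_t)$ to satisfy $a_t\gamma\notin\I$.

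\emph{Part (1).} Suppose $a_tA\epsilon_{\C,t}\ne 0$. Since $\epsilon_{\C,t}=\sum_{i\ne\s(a_t),\t(a_t)}\e_i$, there is a nonzero path $a_tp$ in $A$ with $p$ a path from $\t(a_t)$ to some vertex $v\notin\{\s(a_t),\t(a_t)\}$, and we may take $p$ of maximal length, so $a_tp$ is a nonzero path that cannot be prolonged (it reaches a sink of the arrow-extension process, forced by finite-dimensionality of $A$). First I would observe that $p$ is nonempty and begins with the arrow $\gamma$ with $a_t\gamma\notin\I$ described above, which is \emph{not} $a_{t+1}$ (because $a_ta_{t+1}\in\I$). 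Thus $a_tp$ uses an arrow out of $\t(a_t)$ distinct from $a_{t+1}$, hence $\t(a_t)$ is the source of two arrows, and by the gentle axioms (2)--(3) this controls the continuation uniquely at each step. The idea is then: the walk $a_tp$ together with the arc $a_{t+1}\cdots a_l a_1\cdots a_{t-1}$ of the cycle $\C$ (from $\t(a_t)$ back to $\s(a_t)$, or rather the relevant portion) produces a subgraph of $\underline\Q$ that is not a tree — it contains $\C$ plus an extra branch $p$ leaving $\C$ at $\t(a_t)$ — and since $A$ is finite-dimensional this branch must eventually either re-enter previously visited vertices or dead-end; tracking how the walk closes up, and using that the only vertices \emph{deleted} in passing to $\epsilon_{\C,t}$ are $\s(a_t),\t(a_t)$, I would extract a second cycle on $\underline\Q$ distinct from $\C$. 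Concretely, because $p$ avoids $\s(a_t),\t(a_t)$ but leaves $\t(a_t)$ via an arrow other than $a_{t+1}$, the closed walk one gets by completing $p$ back around cannot be a reparametrization of $\C$, giving at least two cycles.

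\emph{Part (2).} Suppose $\dim_\kk a_tA\e_{t+2}>1$ (with $t+1$ read as $1$ when $t=l$, so $t+2$ is $\s(a_{t+2})=\t(a_{t+1})$). Then there are at least two distinct nonzero paths $a_tp$, $a_tq$ in $A$ with $p,q$ paths from $\t(a_t)$ to the vertex $\t(a_{t+1})$. One of them is the length-one case $a_t a_{t+1}$? — no: $a_ta_{t+1}\in\I$ since $\C$ is full-relational, so \emph{neither} $p$ nor $q$ is the arrow $a_{t+1}$, and in particular $p,q$ both begin with the arrow $\gamma\ne a_{t+1}$ out of $\t(a_t)$ (there is only one such by gentleness). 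Having two \emph{distinct} nonzero paths $p\ne q$ from $\t(a_t)$ to $\t(a_{t+1})$, both starting with $\gamma$, is exactly the configuration that prevents $A$ from being representation-finite: it yields a ``band'' type walk (go out along $p$, come back along $q^{-1}$, forming a primitive cyclic reduced walk on the string combinatorics of $A$), and string algebras with a band have infinitely many indecomposable modules (the band modules), by Butler--Ringel \cite{BR1987}. So the plan for (2) is: (i) produce the two distinct paths, (ii) argue the reduced walk $p\,q^{-1}$ is a band — i.e. it is cyclic, reduced (no $cc^{-1}$ or $c^{-1}c$), and not a proper power — where reducedness uses $p\ne q$ and the fact that $p,q$ are genuine paths (all arrows forward), so the only possible cancellation is at the two endpoints, handled by checking the first/last arrows differ appropriately, and (iii) invoke that a string algebra admitting a band is representation-infinite.

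\emph{Main obstacle.} The delicate point in both parts is the bookkeeping of \emph{which} arrows can be composed, i.e. verifying the walks produced are genuinely new (a second cycle in (1), a primitive band in (2)) rather than degenerate rewritings of $\C$ itself. In (1) one must rule out that the extra path $p$ merely retraces $\C$; in (2) one must rule out that $p$ and $q$, despite being distinct as paths, fail to give a \emph{primitive} cyclic reduced walk (e.g. if one were a power of the other) — this is where the gentle axioms, finite-dimensionality, and the full-relational hypothesis $a_ta_{t+1},a_{t-1}a_t\in\I$ all have to be combined carefully. I expect this combinatorial verification to be the bulk of the work; the structural conclusions (``two cycles'', ``representation-infinite via bands'') are then immediate from the cited descriptions.
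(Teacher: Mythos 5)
Your proposal has a genuine gap in each part, and in both cases it sits exactly at the point you flag as ``delicate bookkeeping'' but do not resolve.

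\emph{Part (1): you have misread $\epsilon_{\C,t}$.} You take $\epsilon_{\C,t}=\sum_{i\ne\s(a_t),\t(a_t)}\e_i$ to run over \emph{all} vertices of $\Q$; in the paper the sum runs only over the vertices of the cycle $\C$ (this is forced by Example~\ref{examp:gent-quot}, where the nine-vertex quiver gives $\epsilon_{\C,1}=\e_3$ only, and by Example~\ref{examp:main 1}, where $\dim_\kk a_2A\epsilon_{\C,1}=1$ even though $a_2A$ is three-dimensional). Under your reading statement (1) is simply false: for the algebra of Example~\ref{exp:gent} the path $a_1c_2d_2$ is nonzero and ends at the vertex $7\notin\{1,2\}$, so $a_1A\epsilon_{\C,1}\ne0$ in your sense, yet $\Q$ has a unique cycle. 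This is precisely the ``branch dead-ends'' case you acknowledge and cannot close --- it cannot be closed. Under the correct reading the target vertex $u$ of the nonzero path $p$ leaving $\t(a_t)$ lies \emph{on} $\C$, so $p$ and the arc $q=a_{t+1}\cdots a_{u-1}$ of $\C$ are two distinct paths from $\t(a_t)$ to $u$ (distinct because every length-$\ge2$ subpath of $\C$ lies in $\I$ while $p\notin\I$, and $p\ne a_{t+1}$ since $a_ta_{t+1}\in\I$); their union already contains a cycle different from $\C$. That is the paper's entire argument.

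\emph{Part (2): your band is the wrong walk.} As you yourself observe, both paths $p,q:\t(a_t)\to\t(a_{t+1})$ must begin with the unique arrow $\gamma$ with $a_t\gamma\notin\I$; but the gentle axioms then force the second letters to agree, and so on, so one of $p,q$ is a prefix of the other. Hence the cyclic walk $pq^{-1}$ is \emph{never} reduced --- the cancellation occurs at the vertex $\t(a_t)$, exactly where you propose to ``check the first arrows differ appropriately.'' After cancelling one is left with the oriented cycle $r$ defined by $q=pr$, and gentleness forces the closing composition of $r$ to be a relation, so $r$ is not a band either. The paper instead pairs the single extra path $p'$ with the cycle arrow $a_{t+1}$ itself: because $a_ta_{t+1}\in\I$, one has $p'\ne a_{t+1}$ and the first arrow of $p'$ differs from $a_{t+1}$, so $p'$ together with $a_{t+1}$ spans a relation-free subquiver of type $\w{\mathbb{A}}$, i.e.\ the closed walk $p'(a_{t+1})^{-1}$ is the band, and Butler--Ringel gives representation-infiniteness. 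Your closing step (iii) is correct, but it must be applied to $p'(a_{t+1})^{-1}$, not to $pq^{-1}$.
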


\begin{proof}
Assume $\s(a_i)=i$ ($1\le i\le l$).

(1) If $a_tA\epsilon_{\C,t}\ne 0$, then there is at least one vertex $1\le u\le l$, $u\ne \s(a_t)=t$ and $u\ne \t(a_t)=t+1$ such that $a_tA\e_u\ne 0$.
Thus, we can find a path $p=b_1\cdots b_m$ from $\t(a_t)=\s(a_{t+1})=t+1$ to $\t(b_m)=u=\s(a_u)$ which is non-zero on the bound quiver $(\Q,\I)$ of $A$.
Then $q=a_{t+1}\cdots a_{u-1}$, a path on $\Q$, and $p$ form a cycle on $\Q$ by using $\s(q)=\s(a_{t+1})=t+1=\s(p)$ and $\t(q)=\t(a_{u-1})=u=\t(p)$.
It follows that (1) holds.

(2) If $\dim_{\kk}a_tA\e_{t+2}>1$, then, except $a_{t+1}$, there is at least one path $p'$ from $\s(a_{t+1})=t+1$ to $\t(a_{t+1})$ such that $p'\notin\I$ holds.
The paths $p'$ and $a_{t+1}$ form a cycle on $\Q$, and any path on this cycle, as an element in $A$, is non-zero.
This cycle forms a hereditary subquiver of Euclidean type $\w{\mathbb{A}}$.
It follows that (2) holds.
\end{proof}

Now, we provide an instance for Lemma \ref{lemm:cycle}.

\begin{example} \label{examp:cycle} \rm
Let $A=\kk\Q/\I$ be a gentle algebra whose quiver $\Q$ is shown in \Pic \ref{fig:cycle},
and the admissible ideal is $\I = \langle a_1a_2, a_2a_3, a_3a_4, a_4a_1\rangle$.
\begin{figure}[htbp]
\centering
\begin{tikzpicture}
\draw[postaction={on each segment={mid arrow=black}}]
  [line width = 1pt]
  (-1,0.5) -- (1,0.5) -- (1,-1) -- (-1,-1) -- (-1,0.5);
\draw [line width = 1pt]
  (1,-1) arc(-45:225:1.4);
\draw[postaction={on each segment={mid arrow=black}}]
  [line width = 1pt] (0.1,1.38) -- (-0.1,1.38);
\fill[white] ( 1,.5) circle (0.25);
\fill[white] ( 1,-1) circle (0.25);
\fill[white] (-1,-1) circle (0.25);
\fill[white] (-1,.5) circle (0.25);
\draw (1,0.5) node{1} (1,-1) node{2} (-1,-1) node{3} (-1,0.5) node{4};
\draw (1,-.25) node[left]{$a_1$} (0,-1) node[below]{$a_2$}
      (-1,-.25) node[right]{$a_3$} (0,0.5) node[above]{$a_4$}
      (0,1.4) node[above]{$b$};
\end{tikzpicture}
\caption{A gentle algebra with two cycles.}
\label{fig:cycle}
\end{figure}
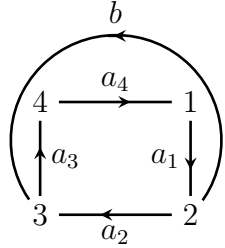
Consider the full-relational oriented cycle $\C=a_1a_2a_3a_4$ and take $t=1$.
Then $\epsilon_{\C,1}=\e_3+\e_4$.

We know $a_1A\e_3 = \kk b \ne 0$ (thus, $a_1A\epsilon_{\C,1}\ne 0$).
It follows that the arrow $b$ (as a path of length one) and the path $a_3a_4a_1$ form a cycle which is not $\C$. As shown in \Pic \ref{fig:cycle}, the quiver of $A$ has two cycles $\C=a_1a_2a_3a_3$ and $ba_3a_4a_1$.

Moreover, for the idempotent $\e_{t+2}=\e_3$, we have $a_1A\e_3 = \kk a_2 + \kk p$ and $\dim_{\kk}a_1A\e_3 = 2$,
that is, the paths $p'=b$ and $a_2$ form a cycle $\mathscr{D}$ which is of the form $\xymatrix{3 & 2 \ar@/^0.5pc/[l]^{a_2} \ar@/_0.5pc/[l]_{b}}$.
$\mathscr{D}$, as a bound subquiver of $(\Q,\I)$, is a 2-Kronecker quiver.
It is well-known that $\kk\mathscr{D}$ is representation-infinite,
then one can check that $A$ is representation-infinite.
\end{example}

\section{Main result} \label{sect:main}
For a gentle algebra $A$ with full-relational oriented cycle $\C=a_1\cdots a_l$, we denote by $\calR_e$ the recollement (\ref{recollement}).
In particular, if $e=\epsilon_{\C,t}$, we denote by $\calR_{\C,t}$ the recollement
\[ \calR_{\epsilon_{\C,t}} :=\ \
\xymatrix@C=2cm{
  \modcat(\ol{A}_{\C,t})
  \ar[r]^{\emb_{\C,t}}_{\text{(embedding)}}
& \modcat(A)
  \ar[r]^{\res_{\C,t} := (-)\epsilon_{\C,t}}_{\text{(retraction)}}
  \ar@/^1.8pc/[l]^{H_{\C,t}:=\Hom_A(\ol{A}_{\C,t},-)}
  \ar@/_1.8pc/[l]_{T_{\C,t}:=-\otimes_A \ol{A}_{\C,t}}
& \modcat(\w{A}_{\C,t}).
  \ar@/_1.8pc/[l]_{\w{T}_{\C,t}:=-\otimes_{\w{A}_{\C,t}} \epsilon_{\C,t} A}
  \ar@/^1.8pc/[l]^{\w{H}_{\C,t}:=\Hom_{\w{A}_{\C,t}}(A\epsilon_{\C,t},-)}
}\]
We define
\[\calR(A) = \{ \calR_e \mid e \text{ is an idempotent of } A \}\]
and
\[\GcalR(A)
:= \bigcup_{
   \begin{smallmatrix}
     \C \text{ is a full-relational} \\
      \text{oriented cycle}
   \end{smallmatrix}
  }
   \{ \calR_{\C, t} \mid 1\le t\le l \}. \]

Now we provide the first main result of our paper,
and we will provide an example for this result, see Example \ref{examp:main 1}.

\begin{theorem} \label{thm:main 1}
Let $A=\kk\Q/\I$ be a gentle algebra with at least one cycle.
If $(\Q,\I)$ has a full-relational oriented cycle,
then there is a injection
\[ \phi: \ind(\npGproj(A)) \to \calR(A) \]
sending each module $G$ lying in $\ind(\npGproj(A))$ to a recollement $\phi(G)$ such that
\begin{itemize}
  \item[\rm(1)] $\phi(G) = \calR_{\C,t}$ for some full-relational oriented cycle $\C=a_1\cdots a_l$ and $1\le t\le l$;
  \item[\rm(2)] $\mathrm{Im}\phi = \GcalR(A)$, and $\varphi: \ind(\npGproj(A)) \to \GcalR(A)$, $G\mapsto\phi(G)$ is a bijection;
  \item[\rm(3)] $T_{\C,t}(G)$ is an indecomposable projective right $\ol{A}_{\C, t}$-module;
  \item[\rm(4)] if there is an $A$-module $G\in\ind(\npGproj(A))$ satisfying $\dim_{\kk} \res_{\C,t}(G)\ge 2$, where $\res_{\C,t}$ is the retraction functor given in the recollement $\phi(G)$, then $A$ is representation-infinite.
\end{itemize}
\end{theorem}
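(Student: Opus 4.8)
The plan is to combine Kalck's classification (Theorem \ref{thm:Kalck}), which tells us exactly what the objects of $\ind(\npGproj(A))$ are, with the structural lemmas of Section~3 (Lemmas~\ref{lemm:idemp}, \ref{lemm:proj}, \ref{lemm:cycle}) and the standard recollement of Example~\ref{thm:recollement}. By Theorem~\ref{thm:Kalck}, every $G\in\ind(\npGproj(A))$ is isomorphic to $a_tA$ for some arrow $a_t$ lying on a full-relational oriented cycle $\C=a_1\cdots a_\ell$; conversely each such arrow yields such a module. The only ambiguity is that the \emph{same} module $a_tA$ might a priori be obtained from two different cycles, or the indexing $(\C,t)$ might not be canonically attached to $G$; I would fix this by defining $\phi(G)$ to be $\calR_{\C,t}$ where $(\C,t)$ is read off from $G$ via $G\cong a_tA$ with $a_t=\mathrm{rad}(e_{\s(a_t)}A)\cap\ldots$ — concretely, the arrow $a_t$ is recovered as the unique arrow $a$ with $G\cong aA$ (an arrow on a full-relational cycle is determined by the module $aA$ up to isomorphism, since its top is the simple at $\s(a)$ and $\dim aA$ with its structure pins down $a$), and the cycle $\C$ through $a$ is unique because in a gentle pair an arrow lies on at most one full-relational oriented cycle (conditions (2),(3) of the gentle pair force the composable/non-composable partners to be unique). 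This well-definedness and injectivity argument is step one.

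For (2): surjectivity onto $\GcalR(A)$ is immediate once step one is in place, since every $\calR_{\C,t}\in\GcalR(A)$ is $\phi(a_tA)$ and $a_tA\in\ind(\npGproj(A))$ by Theorem~\ref{thm:Kalck}; combined with injectivity from step one, $\varphi:\ind(\npGproj(A))\to\GcalR(A)$ is a bijection. For (3): here $T_{\C,t}=-\otimes_A\ol{A}_{\C,t}$, so $T_{\C,t}(G)=a_tA\otimes_A\ol{A}_{\C,t}$, which by Lemma~\ref{lemm:idemp} is an indecomposable projective right $\ol{A}_{\C,t}$-module — in fact $\cong\ol{e}_{\t(a_t)}\ol{A}_{\C,t}$. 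So (3) is just a direct invocation of Lemma~\ref{lemm:idemp} applied to the correct $t$. For (4): $\res_{\C,t}(G)=G\epsilon_{\C,t}=a_tA\epsilon_{\C,t}=\bigoplus_{i\ne\s(a_t),\t(a_t)}a_tAe_i$. If $\dim_\kk$ of this is $\ge 2$, then either some $a_tAe_i\ne 0$ for $i\ne\s(a_t),\t(a_t)$, or $\dim_\kk a_tAe_{t+2}\ge 2$ already; in the first case Lemma~\ref{lemm:cycle}(1) gives a second cycle and I would then push a little further to locate a Euclidean $\w{\mathbb{A}}$ or Kronecker subquiver (using the full-relationality of $\C$ to guarantee the new cycle carries a relation-free path, as in the proof of Lemma~\ref{lemm:cycle}), and in the second case Lemma~\ref{lemm:cycle}(2) applies verbatim to conclude $A$ is representation-infinite.

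The main obstacle I anticipate is step one — the precise well-definedness/injectivity bookkeeping. One must be careful that the map $G\mapsto(\C,t)$ is genuinely canonical: different arrows on the \emph{same} full-relational cycle give non-isomorphic modules $a_tA$ (their tops are distinct simples), so no collision there; and an isomorphism $a_tA\cong a'_rA$ with $a_t,a'_r$ on possibly different full-relational cycles forces $\s(a_t)=\s(a'_r)$, hence $a_t$ and $a'_r$ start at the same vertex, and then a short argument with the gentle conditions shows the full-relational cycle emanating from a given vertex through a given outgoing arrow is unique, so $\C=\w\C$ and $t=r$. A secondary subtlety in (4) is making sure the "second cycle" produced by Lemma~\ref{lemm:cycle}(1) actually yields representation-infiniteness rather than merely a second cycle; I would handle this by the same hereditary-subquiver-of-type-$\w{\mathbb A}$ argument used in Lemma~\ref{lemm:cycle}(2) and Example~\ref{examp:cycle}, noting that the extra path provided by $a_tAe_i\ne 0$ together with the arc along $\C$ forms a cycle all of whose paths are nonzero, forcing a tame or wild hereditary piece and hence infinite representation type.
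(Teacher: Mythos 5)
Your overall route is the same as the paper's: identify $\ind(\npGproj(A))$ with the arrows on full-relational oriented cycles via Theorem \ref{thm:Kalck}, define $\phi$ by $a_tA\mapsto\calR_{\C,t}$, get surjectivity onto $\GcalR(A)$ for free, and obtain (3) and (4) from Lemma \ref{lemm:idemp} and Lemma \ref{lemm:cycle} respectively. Your extra care over well-definedness and injectivity is a genuine improvement on the paper, which dismisses this with ``it is clear'': your observation that the full-relational cycle through a given arrow is unique (by gentle conditions (2)--(3) the relational successor and predecessor of each arrow are unique, so the cycle is determined by any one of its arrows) is exactly the missing justification. One small slip there: an isomorphism $a_tA\cong a_r'A$ forces $\t(a_t)=\t(a_r')$, not $\s(a_t)=\s(a_r')$, since the top of $a_tA$ is the simple at $\t(a_t)$; the argument goes through either way.

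The one concrete problem is in your treatment of (4). In your first case you propose to build a band from ``the extra path provided by $a_tA\e_i\ne 0$ together with the arc along $\C$,'' claiming this cycle has all paths nonzero. That is false: the arc of $\C$ from $\t(a_t)$ to $i$ is $a_{t+1}\cdots a_{i-1}$, and every consecutive product $a_ja_{j+1}$ lies in $\I$ because $\C$ is full-relational, so this walk is not a string and yields no band. Lemma \ref{lemm:cycle}(1) by itself only produces a second cycle, which does not imply representation-infiniteness for a gentle algebra. The mechanism that actually works is the one in the proof of Lemma \ref{lemm:cycle}(2): a nonzero path $p'\ne a_{t+1}$ \emph{parallel to the single arrow} $a_{t+1}$ gives two relation-free parallel paths, hence a band (a hereditary subquiver of type $\w{\mathbb{A}}$). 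To be fair, the paper's own proof of (4) consists of the single sentence ``Lemma \ref{lemm:cycle} provides the statement (4),'' so you are not less rigorous than the source; but if you want a complete argument you should reduce the hypothesis $\dim_\kk\res_{\C,t}(G)\ge 2$ to a configuration of two parallel relation-free paths, rather than pairing a nonzero path with an arc of the full-relational cycle itself.
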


\begin{proof}
(1)+(2) Assume $\ol{A}_{\C, t} = \kk\ol{\Q}/\ol{\I}$ in this proof.
For any $G\in \ind(\npGproj(A))$, we have $G \cong a_tA$ by Theorem \ref{thm:Kalck},
where $\C=a_1\cdots a_l$ is a full-relational oriented cycle and $a_t$ ($1\le t\le l$) is an arrow on $\C$. Define
\[\phi: \ind(\npGproj(A)) \to \calR(A), \ a_tA \mapsto \calR_{\C,t},\]
it is clear that $\phi$ is injective.

On the other hand, for arbitrary full-relational oriented cycle $\C'=a_1'\cdots a_{l'}'$ and any arrow $a_{t'}'$ ($1\le t'\le l'$),
$a_{t'}'A$ is a preimage of the recollement $\calR_{\C',t'}$ under the map $\phi$.
It follows that $\mathrm{Im}\phi$ and $\GcalR(A)$ coincide and
\[\varphi: \ind(\npGproj(A)) \to \GcalR(A)\]
is surjective. Then we construct a map $\phi$ from $\ind(\npGproj(A))$ to $\calR(A)$ such that (1) and (2) hold.

(3) The tensor product $G\otimes_{A}\ol{A}_{\C,t} \cong a_tA\otimes_{A}\ol{A}_{\C,t}$
is isomorphic to the indecomposable projective right $\ol{A}_{\C,t}$-module
$\ol{\e}_{\t(a_t)}\ol{A}_{\C,t}$ by Lemma \ref{lemm:idemp}, that is, (3) holds.

(4)
Lemma \ref{lemm:cycle} provides the statement (4).
\end{proof}

By \cite[Remark 2.5]{Psa2014}, we know that the functor $q$ in the recollement (\ref{recollement diagram}) preserves projective objects.
In general, $q$ may not preserve G-projective objects.
See for example, in \cite[Example 2.5]{ZZ2020}, the authors provided an example to show this fact by using the algebra $A=\kk\Q/\I$ with the quiver
\[\begin{tikzpicture}
\draw[->][line width=1pt] (0,-0.1) arc(20:340:0.75) node[above]{$1$};
\draw (-1.5,-0.3) node[left]{$b$};
\draw[<-][line width=1pt] (0.2,-0.3) -- (2.2,-0.3);
\draw (2.2,-0.3) node[right]{$2$};
\draw (1.2,-0.3) node[above]{$a$};
\end{tikzpicture}\]
and the admissible ideal $\I=\langle ab, a^2 \rangle$.

As an application of Theorem \ref{thm:main 1}, we obtain the following result, which shows that the functor $T_{\C,t}$ in the recollements of the form $\calR_{\C,t}$ defined over some special gentle algebras preserves G-projective objects.


\begin{corollary} \label{coro:of main 1}
Let $A=\kk\Q/\I$ be a gentle algebra.
If arbitrary two full-relational oriented cycles of $A$ have no common vertex, then for any full-relational oriented cycle $\C=a_1\cdots a_l$ and arbitrary $1\le t\le l$, the functor $T_{\C,t}: \modcat(A) \to \modcat(\ol{A}_{\C,t})$ of the recollement $\calR_{\C,t}$ preserves G-projectives.
\end{corollary}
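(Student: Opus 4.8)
The plan is to reduce, by additivity of $T_{\C,t}=-\otimes_A\ol{A}_{\C,t}$, to indecomposable G-projective modules and then to run through Kalck's classification (Theorem~\ref{thm:Kalck}). Since $\modcat(A)$ is Krull--Schmidt and $\Gproj(A)$ is closed under direct sums and direct summands, every G-projective right $A$-module is a finite direct sum of indecomposable G-projective modules; as $T_{\C,t}$ is additive and a direct sum of G-projective $\ol{A}_{\C,t}$-modules is again G-projective, it suffices to prove $T_{\C,t}(G)\in\Gproj(\ol{A}_{\C,t})$ for every \emph{indecomposable} G-projective right $A$-module $G$. By Theorem~\ref{thm:Kalck}, $G$ is isomorphic either to $eA$ for a primitive idempotent $e$, or to $a_r'A$ for an arrow $a_r'$ on some full-relational oriented cycle $\w{\C}=a_1'\cdots a_m'$ of $(\Q,\I)$; throughout we use $\ell(\C)\ge 3$ (Assumption~\ref{assump}).

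For $G=eA$ the claim is immediate: $-\otimes_A\ol{A}_{\C,t}$ is right exact and sends $A_A$ to $\ol{A}_{\C,t}$, hence sends projectives to projectives, so $T_{\C,t}(eA)\cong\ol{e}\,\ol{A}_{\C,t}$ is projective and thus G-projective (this is also the general fact that the functor $q$ of a recollement preserves projectives, cf.\ \cite[Remark~2.5]{Psa2014}). For $G=a_r'A$ I would invoke the standard isomorphism $a_r'A\otimes_A\ol{A}_{\C,t}\cong a_r'\ol{A}_{\C,t}=\ol{a_r'}\,\ol{A}_{\C,t}$, already used in the proofs of Lemmas~\ref{lemm:idemp} and~\ref{lemm:proj}, and split into cases according to whether $\w{\C}=\C$.

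If $\w{\C}=\C$ and $a_r'=a_t$, then $T_{\C,t}(G)\cong\ol{\e}_{\t(a_t)}\ol{A}_{\C,t}$ is an indecomposable projective $\ol{A}_{\C,t}$-module by Lemma~\ref{lemm:idemp} (equivalently, Theorem~\ref{thm:main 1}(3)). If $\w{\C}=\C$ and $a_r'=a_u$ with $u\ne t$, then, as $\ell(\C)\ge 3$, at least one endpoint of $a_u$ is a vertex of $\C$ lying in the support of $\epsilon_{\C,t}$, hence $\ol{a_u}=0$ and $T_{\C,t}(G)=0$; in both cases $T_{\C,t}(G)$ is G-projective. The remaining case $\w{\C}\ne\C$ is where the hypothesis is used: since $\C$ and $\w{\C}$ have no common vertex while $\epsilon_{\C,t}$ is supported only on vertices of $\C$, none of the arrows or vertices of $\w{\C}$, nor any of the relations $a_i'a_{i+1}'\in\I$ among its consecutive arrows, is destroyed in passing to $\ol{A}_{\C,t}=\kk\ol{\Q}/\ol{\I}$; hence $\ol{a_1'}\cdots\ol{a_m'}$ is still a full-relational oriented cycle on $\ol{\Q}$. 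Moreover $\ol{A}_{\C,t}$ is again a gentle algebra---being the quotient of a gentle algebra by an ideal generated by an idempotent, its quiver arises from that of $A$ by deleting the vertices in the support of $\epsilon_{\C,t}$, and all gentle conditions are inherited---and its full-relational oriented cycles still have length $\ge 3$. Applying Theorem~\ref{thm:Kalck} to $\ol{A}_{\C,t}$, the module $T_{\C,t}(G)\cong\ol{a_r'}\,\ol{A}_{\C,t}$ is an indecomposable non-projective G-projective right $\ol{A}_{\C,t}$-module. This exhausts all cases, so $T_{\C,t}$ preserves G-projectives.

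The step I expect to be the main obstacle is the case $\w{\C}\ne\C$: one must verify carefully that the ``no common vertex'' hypothesis is exactly what guarantees that every other full-relational oriented cycle $\w{\C}$ of $\Q$ survives intact inside the quotient quiver $\ol{\Q}$ (and, as bookkeeping, that passing to the quotient cannot shorten such a cycle below length $3$), so that Kalck's classification may legitimately be re-applied over $\ol{A}_{\C,t}$. Were $\C$ and $\w{\C}$ to share a vertex different from $\s(a_t)$ and $\t(a_t)$, that vertex would be deleted, $\w{\C}$ would be broken, and an arrow of $\w{\C}$ surviving the quotient could generate an $\ol{A}_{\C,t}$-module no longer covered by Theorem~\ref{thm:Kalck}; the subtleties underlying this obstruction are already illustrated by Example~\ref{examp:counter-examp}.
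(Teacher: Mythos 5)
Your proposal is correct and follows essentially the same route as the paper's proof: reduce to indecomposables, dispose of the projective case since $T_{\C,t}$ preserves projectives, and then split the non-projective case $G\cong a_r'A$ according to whether the supporting cycle $\w{\C}$ equals $\C$ (giving $\ol{\e}_{\t(a_t)}\ol{A}_{\C,t}$ or $0$) or is disjoint from $\C$ (where the cycle survives intact in $\ol{\Q}$ and Kalck's theorem applies to $\ol{A}_{\C,t}$). Your additional bookkeeping --- the explicit Krull--Schmidt reduction, the check that $\ol{A}_{\C,t}$ is again gentle with full-relational cycles of length $\ge 3$, and the use of $\ell(\C)\ge 3$ to see that $\ol{a_u}=0$ for $u\ne t$ --- only makes explicit what the paper leaves implicit.
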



\begin{proof}
Let $G$ be an indecomposable G-projective right $A$-module.
If $G$ is projective, then $T_{\C,t}(G)$ is projective since $T_{\C,t} = - \otimes_A \ol{A}_{\C,t}$ preserves projective objects.
Now, we assume $G\in\ind(\npGproj(A))$, then there is a full-relational oriented cycle $\mathscr{D} = b_1\cdots b_m$ such that $G$ is isomorphic to $b_iA$.
We have two cases as follows:

(1) $\mathscr{D}=\C$;

(2) $\mathscr{D}\ne\C$.

In the case (1), $b_i=a_j$ for some $1\le j\le l$. If $j\ne t$, then the image of the arrow $a_j$ is zero up to the canonical epimorphism $\pi: A\to \ol{A}_{\C,t} = A/A\epsilon_{\C,t}A$.
It follows that $T_{\C,t}(b_iA) = a_{j}A\otimes_A\ol{A}_{\C,t} = \pi(a_j)\ol{A}_{\C,t} = 0$ is projective.
If $j=t$, then $T_{\C,t}(b_iA)=T_{\C,t}(a_tA)$ is projective by Theorem \ref{thm:main 1}(3).

In the case (2), since $\mathscr{D}$ and $\C$ have no common vertex, we obtain that $\pi(\mathscr{D})$, the image of $\mathscr{D}$ under the canonical epimorphism $\pi$, is also a full-relational oriented cycle on the bound quiver of $\ol{A}_{\C,t}$.
Thus, any arrow $b_i$ on $\mathscr{D}$ can be seen as an arrow $\pi(b_i)$ on $\pi(\mathscr{D})$.
Then $T_{\C,t}(b_iA) = b_iA \otimes_A \ol{A}_{\C,t} \cong b_i\ol{A}_{\C,t} \cong \pi(b_i)\ol{A}_{\C,t}$ is a G-projective right $\ol{A}_{\C,t}$-module in $\ind(\npGproj(\ol{A}_{\C,t}))$ by Theorem \ref{thm:Kalck}.
\end{proof}

In Corollary \ref{coro:of main 1}, the condition ``arbitrary two full-relational oriented cycles of $A$ have no common vertex'' is necessary, see Example \ref{examp:coro-of main 1}.

If $A$ is a gentle one-cycle algebra, then
we have some finer precise properties than Theorem \ref{thm:main 1}.

\begin{theorem} \label{thm:main 2}
Assume that $A$ is a gentle one-cycle algebra. If the unique cycle of $(\Q,\I)$, written as $\C=a_1\cdots a_l$
{\rm(}$\s(a_i)=i$, $\forall 1\le i\le l${\rm)}, is full-relational oriented, then the following statements hold.
\begin{itemize}
  \item[\rm(1)] $T_{\C,t}(a_uA) \cong
    \begin{cases}
      \ol{e}_{\t(a_t)}\ol{A}_{\C,t}, & \text{if } u=t; \\
      0, & \text{if } u\ne t
    \end{cases}$ holds for all $1\le u\le l$;
  \item[\rm(2)] for any indecomposable projective right $\ol{A}_{\C,t}$-module $P$
    satisfying $P \not\cong \ol{e}_{\t(a_t)}\ol{A}_{\C,t}$,
    we have $\emb_{\C,t}(P)\notin \ind(\npGproj(A))$;
  \item[\rm(3)] $\emb_{\C,t}(T_{\C,t}(a_tA)) \in \ind(\npGproj(A))$. 
\end{itemize}
\end{theorem}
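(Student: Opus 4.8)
The plan is to handle the three items using the three lemmas of Section~3, together with the fact that for a gentle one-cycle algebra $\C$ is the \emph{only} cycle of $\Q$ and is full-relational oriented by hypothesis. Write $\bar{x}$ for the image of $x\in A$ in $\ol{A}_{\C,t}$, and recall that $T_{\C,t}(M)=M\otimes_A\ol{A}_{\C,t}\cong M/M\epsilon_{\C,t}A$ for every right $A$-module $M$, so that $\emb_{\C,t}(T_{\C,t}(M))$ is just $M/M\epsilon_{\C,t}A$ regarded as a right $A$-module. For item~(1) with $u=t$ the statement is exactly Lemma~\ref{lemm:idemp}: $T_{\C,t}(a_tA)\cong a_t\ol{A}_{\C,t}\cong\ol{\e}_{\t(a_t)}\ol{A}_{\C,t}$, an indecomposable projective. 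For $u\ne t$ I would argue that, since $\epsilon_{\C,t}=\sum_{i\ne\s(a_t),\t(a_t)}\e_i$ runs over the vertices of $\C$ and $\ell(\C)\ge3$ (Assumption~\ref{assump}), at least one of the endpoints $\s(a_u),\t(a_u)$ is a vertex deleted in passing to $\ol{A}_{\C,t}$; hence $\ol{a_u}=0$ in $\ol{A}_{\C,t}$, so $T_{\C,t}(a_uA)\cong a_u\ol{A}_{\C,t}=0$.

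For item~(2): write $P=\ol{\e}_v\ol{A}_{\C,t}$ with $v\in\ol{\Q}_0$ and $P\not\cong\ol{\e}_{\t(a_t)}\ol{A}_{\C,t}$. Regarded as a vertex of $\Q$, $v$ either lies on the full-relational oriented cycle $\C$, which is condition~(b) of Lemma~\ref{lemm:proj}, or — since $\C$ is the only cycle — lies on no full-relational oriented cycle at all, which is condition~(a); condition~(c) cannot arise. In either case Lemma~\ref{lemm:proj} gives that $P$ regarded as a right $A$-module, i.e.\ $\emb_{\C,t}(P)$, is not a non-projective indecomposable G-projective module, so $\emb_{\C,t}(P)\notin\ind(\npGproj(A))$.

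For item~(3): combining item~(1) with Lemma~\ref{lemm:idemp} gives $T_{\C,t}(a_tA)\cong\ol{\e}_{\t(a_t)}\ol{A}_{\C,t}\cong a_t\ol{A}_{\C,t}$, so $\emb_{\C,t}(T_{\C,t}(a_tA))\cong a_tA/a_tA\epsilon_{\C,t}A$ as a right $A$-module. Since $A$ has only one cycle, Lemma~\ref{lemm:cycle}(1) forces $a_tA\epsilon_{\C,t}=0$, hence $a_tA\epsilon_{\C,t}A=0$, and therefore $\emb_{\C,t}(T_{\C,t}(a_tA))\cong a_tA$; finally $a_tA\in\ind(\npGproj(A))$ by Theorem~\ref{thm:Kalck} because $a_t$ lies on the full-relational oriented cycle $\C$. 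The identification $M\otimes_A\ol{A}_{\C,t}\cong M/M\epsilon_{\C,t}A$ and the appeal to Lemma~\ref{lemm:cycle}(1) are routine; I expect the step needing the most care to be the vanishing in item~(1), namely verifying for \emph{every} cycle arrow $a_u$ with $u\ne t$ that one of its endpoints lies among the cycle vertices deleted in $\ol{A}_{\C,t}$, which is exactly where the ``one cycle'' hypothesis and the length bound $\ell(\C)\ge3$ enter.
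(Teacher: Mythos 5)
Your treatment of items (2) and (3) is correct, and item (3) in particular takes a genuinely different and cleaner route than the paper: instead of the paper's explicit bijection between the path bases of $\ol{\e}_{\t(a_t)}\ol{A}_{\C,t}$ and of $a_tA$, you use $T_{\C,t}(M)\cong M/M\epsilon_{\C,t}A$ together with the contrapositive of Lemma \ref{lemm:cycle}(1) to get $a_tA\epsilon_{\C,t}=0$ in the one-cycle case, whence $\emb_{\C,t}(T_{\C,t}(a_tA))\cong a_tA$ directly. For (2) you also streamline: the paper invokes Lemma \ref{lemm:proj}(a) only for $v\ne\s(a_t)$ and re-argues the case $v=\s(a_t)$ by hand, whereas you observe that this case is exactly condition (b) of Lemma \ref{lemm:proj}; that is legitimate and arguably how the paper should have organized it.

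There is, however, a genuine gap in your proof of the vanishing in item (1), precisely at $u\equiv t-1$ (i.e.\ $\t(a_u)=\s(a_t)$). You infer $T_{\C,t}(a_uA)=0$ from $\ol{a_u}=0$ via the identification $a_uA\otimes_A\ol{A}_{\C,t}\cong \ol{a_u}\ol{A}_{\C,t}$, but this identification is exactly where the argument breaks: the natural surjection $a_uA\otimes_A\ol{A}_{\C,t}\cong a_uA/a_uA\epsilon_{\C,t}A\twoheadrightarrow \ol{a_u}\ol{A}_{\C,t}$ has kernel $(a_uA\cap A\epsilon_{\C,t}A)/a_uA\epsilon_{\C,t}A$, which is nonzero whenever $\s(a_u)$ is a deleted vertex but $\t(a_u)$ is not --- and that is the case $u=t-1$, since $\t(a_{t-1})=\s(a_t)$ survives while $\s(a_{t-1})=t-1$ is deleted (using $l\ge 3$). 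Indeed, by your own (correct) formula $T_{\C,t}(M)\cong M/M\epsilon_{\C,t}A$ and the very same one-cycle argument you use in item (3), one gets $a_{t-1}A\epsilon_{\C,t}=0$, hence $T_{\C,t}(a_{t-1}A)\cong a_{t-1}A\ne 0$. (Concretely, in Example \ref{exp:gent} with $t=1$ one has $\epsilon_{\C,1}=\e_3$ and $a_3A\cong\left(\begin{smallmatrix}1\\4\\5\end{smallmatrix}\right)$ supported away from vertex $3$, so $T_{\C,1}(a_3A)\cong a_3A\ne 0$.) For all other $u\ne t$ your argument is fine, because there $\t(a_u)$ is deleted, so the generator satisfies $a_u=a_u\epsilon_{\C,t}$ and $a_uA=a_uA\epsilon_{\C,t}A$. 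You should be aware that the paper's own one-line justification (``any arrow $a_u$ as an element in $\ol{A}_{\C,t}$ equals zero'') commits the same conflation, so the defect is inherited from the source; but as written, neither your argument nor the paper's establishes the stated vanishing for $u=t-1$, and the correct conclusion in that case is $T_{\C,t}(a_{t-1}A)\cong a_{t-1}A$ rather than $0$.
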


\begin{proof}
Let $P = \ol{\e}_v\ol{A}_{\C,t}$ be an indecomposable projective right $\ol{A}_{\C,t}$-module which does not be isomorphic to $\ol{\e}_{\t(a_t)}\ol{A}_{\C,t}$.
The case of $u=t$ in the statement (1) is a direct corollary of Theorem \ref{thm:main 1} (3),
and the case of $u\ne t$ in the statement (1) holds since any arrow $a_u$ as an element in $\ol{A}_{\C,t}$ equals to zero.
Next, we show (2) and (3).

(2) If $v\ne\s(a_t)$, then $v$ must be a vertex which is not on $\C$.
Since $A$ is gentle one-cycle, the statement (1.2) holds by Lemma \ref{lemm:proj}.
If $v=\s(a_t)$ ($=\t(a_{t-1})$, here, $t-1=l$ if $t=1$, we assume $P=\ol{\e}_v\ol{A}_{\C,t} \in \ind(\npGproj(A))$,
then, by Theorem \ref{thm:Kalck} and $\dim_{\kk}P\ol{\e}_{\t(a_{t-1})} = \dim_{\kk}P\ol{\e}_{v}$ $(=\dim_{\kk}P\e_{v})$ $\ne 0$,
we have $P\cong a_{t-1}A$. immediately, we obtain
\[ \dim_{\kk} \ol{\e}_{\s(a_t)}\ol{A}_{\C,t}\e_{\s(a_{t-1})}
= \dim_{\kk} P\e_{\s(a_{t-1})}
= \dim_{\kk} a_{t-1}A\e_{\s(a_{t-1})} \ne 0. \]
It follows that the length $l$ of $\C$ equals to two, which contradicts with Assumption \ref{assump}.
Thus, the statement (2) holds.

(3) We have
\begin{align} \label{formula: iso in thm 1}
 T_{\C,t}(a_tA) = a_t A\otimes_{A} \ol{A}_{\C,t} \cong \ol{\e}_{\t(a_t)}\ol{A}_{\C,t}
 = \bigoplus_{
     \begin{smallmatrix}
       {\s(p)=\t(a_t)} \\
       {p \text{ is a non-zero path on } (\ol{\Q},\ol{\I})}
     \end{smallmatrix}
 } \kk p
\end{align}
and
\begin{align} \label{formula: iso in thm 2}
a_t A = \bigoplus_{
  \begin{smallmatrix}
    {\s(q)=\t(a_t)} \\ {q \text{ is a non-zero path on } (\Q,\I)}
  \end{smallmatrix}
  } \kk a_tq
\end{align}
by the definition of finite-dimensional algebra.
Let $X_1$ be the set of all direct summands of (\ref{formula: iso in thm 1})
and $X_2$ be that of all direct summands of (\ref{formula: iso in thm 2}). Then the map
\[ f: X_1 \to X_2, \ \kk p\mapsto \kk a_tp\]
is injective by using the following two facts:
\begin{itemize}
  \item Any path on the quiver $\ol{\Q}$ of $\ol{A}_{\C,t}$ can be seen as a path on the quiver $\Q$ of $A$ (or equivalently, each right $\ol{A}_{\C,t}$-module naturally is a right $A$-module);
  \item $a_tp\ne 0$ holds for any $p\in X_1$. Otherwise,
    assume $p=b_1\cdots b_{\ell}$ ($b_i\in\Q_1$ for all $1\le i\le \ell$, $b_1\ne a_{t+1}$),
    we obtain $a_tb_1\in\I$. It contradict with $A$ to be a gentle algebra.
\end{itemize}
Moreover, for each $a_tq \in X_2$, the path $q$ does not cross any vertex lying in $\{1,\cdots, t-1,t+2, \cdots,l\}$,
then $q$ is also a path on $(\ol{\Q}.\ol{\I})$ since $\Q$ has only one cycle $\C$,
i.e., $q\in X_1$. We obtain $f(q)=a_1q$. Thus, $f$ is surjective. It follows that $f$ is a bijection.
Then the map $f$ induces a homomorphism between $T_{\C,t}(a_tA)$ and $a_tA$ which naturally is an isomorphism as required.
\end{proof}

Now we provide a remark for Theorem \ref{thm:main 2}.

\begin{remark} \label{rmk: rmk of main 2} \rm
(1) For a gentle one-cycle algebra $A$, it is clear that $\ind(\npGproj(A)) \ne \varnothing$ admits that $A$ is representation-finite. 
Indeed, without loss of generality, assume $b_tA\in\ind(\npGproj(A))$ by Theorem \ref{thm:Kalck},
where $b_t$ is an arrow on some full-relational oriented cycle $\mathscr{D} = b_1\cdots b_{\ell}$.
Since $A$ is gentle one-cycle, $\mathscr{D}$ is the unique cycle on $\Q$.
It is well-known that $A$ is representation-infinite if and only if
$\mathscr{D}$, as a bound subquiver of $(\Q,\I)$, is a Euclidean type of $\w{\mathbb{A}}$ without relation
(or equivalently, if and only if $(\Q,\I)$ contains a band, see \cite[Theorem in page 161]{BR1987}),
we obtain a contradiction.

(2) For any gentle one-cycle algebra $A=\kk\Q/\I$ with full-relational oriented cycle $\C = a_1 \cdots a_l$,
by Theorem \ref{thm:main 2} (1) and the fact that
$T_{\C,t}$ preserves projectives, we have that
\[ T_{\C,t}|_{\Gproj(A)}: \Gproj(A) \to \proj(\ol{A}_{\C,t}),
\ G \mapsto G\otimes_A \ol{A}_{\C,t}\]
is a surjection.
Since it is trivial that $\proj(\ol{A}_{\C,t}) \subseteq \Gproj(\ol{A}_{\C,t})$ (or precisely, $\proj(\ol{A}_{\C,t}) = \Gproj(\ol{A}_{\C,t})$ in the case of $A$ to be gentle one-cycle),
we obtain that $T_{\C,t}|_{\Gproj(A)}$ sends each indecomposable G-projective right $A$-module to an indecomposable G-projective right $\ol{A}_{\C,t}$-module.
In Example \ref{examp:rmk of main 2}, we show that it is necessary that $A$ is gentle one-cycle.

(3) For two Artinian algebras $\alg_1$ and $\alg_2$, let $F: \modcat(\alg_1) \to \modcat(\alg_2)$ be a functor preserving projective objects and admitting a right adjoint functor $G$.
Lu gave some sufficient conditions (i.e., the conditions $\spadesuit$ and $\clubsuit$) in \cite[Lemma 3.11]{Lu2017} such that $F$ preserves G-projective objects.
The conditions in Corollary \ref{coro:of main 1} and Theorem
\ref{thm:main 2} are different from those of Lu.
For example, let $A=\kk\Q/\I$ be a gentle algebra given by the quiver $Q$
$$\xymatrix@C=20pt{&&&&5\\
&&&2\ar[ru]^{a_5}\ar[rd]^{a_2}\\
4&&\ar[ll]^{a_4}1\ar[ru]^{a_1}&&\ar[ll]^{a_3}3\ar[rd]^{a_6}\\
&&&&&6}$$
and the admissible ideal $\I=\langle a_1a_2, a_2a_3, a_3a_1\rangle$.
Take the full-relational oriented cycle $\C=a_1a_2a_3$ and $t=3$. In the recollement $\calR_{\C,3}$, $T_{\C,3}: \modcat(A) \to \modcat(\ol{A}_{\C,3})$ preserves projectives and adimits a right adjoint functor.
Next, we show that $T_{\C,3}$ does not satisfy the conditions $\spadesuit$ and $\clubsuit$.
\begin{itemize}
  \item For the indecomposable projective right $\ol{A}_{\C,3}$-modules
    $P(1)_{\ol{A}_{\C,3}} = (^{1}_{4})_{\ol{A}_{\C,3}}$ and
    $P(3)_{\ol{A}_{\C,3}} = (^{3}_{6})_{\ol{A}_{\C,3}} \cong a_2A$,
    we have the following short exact sequence
    \[0 \longrightarrow (^{1}_{4})_A \cong T_{\C,3}((^{1}_{4})_{\ol{A}_{\C,3}})
        \longrightarrow P(3)_A
        \longrightarrow a_2A \cong T_{\C,3}((^{3}_{6})_{\ol{A}_{\C,3}})
        \longrightarrow 0 \]
    in $\modcat A$. It follows that $\Ext_A^1(a_2A, (^{1}_{4})_A) \ne 0$,
    i.e., $\spadesuit$ not holds.
  \item One can check $\mathrm{proj.dim}(T_{\C,3}((^{1}_{4})_{\ol{A}_{\C,3}})) = \infty$
  and $\mathrm{inj.dim}(T_{\C,3}((^{1}_{4})_{\ol{A}_{\C,3}})) = \infty$,
  then $\clubsuit$ not holds.
\end{itemize}
However, $T_{\C,t}$ preserves G-projectives by Theorem \ref{thm:main 2}.
\end{remark}

\section{Example} \label{sect:examp}

Finally, we will provide some examples to expalain the obtained results in Section \ref{sect:main}.

\begin{example} \rm \label{examp:main 1}
Consider the gentle algebra $A=\kk\Q/\I$ given in Example \ref{exp:gent}, where $\Q$ is shown in \Pic \ref{fig:gent} and $\I=\langle a_1a_2, a_2a_3, a_3a_1, b_1c_1, b_2c_2, b_3c_3, d_1b_3, d_2b_1, d_3b_2\rangle$.
We have three non-projective indecomposable G-projective right $A$-modules:
\begin{center}
$a_1A \cong \left(
\begin{smallmatrix}
2\\ 6\\ 7
\end{smallmatrix}
\right)_A$,
$a_2A \cong \left(
\begin{smallmatrix}
3\\ 8\\ 9
\end{smallmatrix}
\right)_A$,
and
$a_3A \cong \left(
\begin{smallmatrix}
1\\ 4\\ 5
\end{smallmatrix}
\right)_A$,
\end{center}
by Theorem \ref{thm:main 1},
they correspond to three recollements $\calR_{\C,1}$, $\calR_{\C,2}$, and $\calR_{\C,3}$, respectively.
Here, $\C = a_1a_2a_3$. See \Pic \ref{fig:mian examp},
\begin{figure}[htbp]
\centering
\small
\begin{tikzpicture}[scale = 0.45]
\draw[blue][line width=2pt][dotted] ( 1.73,-1.  ) circle(0.65);
\draw[blue] (1.73+0.65,-1) node[right]{$\w{A}_{\C,t}=\e_3A\e_3$};
\draw[red][line width=24pt]
  ( 0.  , 2.  ) -- (-1.73,-1.  ) -- (-7.82,-1.  ) to[out=45,in=180] (0, 2) --
  ( 3.23, 7.62);
\draw[red][line width=24pt]
  (-1.73,-1.  ) to[out=-60,in=165] (4.71,-6.31) -- (2.71,-3.03);
\draw[white][line width=21pt]
  ( 0.  , 2.  ) -- (-1.73,-1.  ) -- (-7.82,-1.  ) to[out=45,in=180] (0, 2) --
  ( 3.18, 7.52);
\draw[white][line width=21pt]
  (-1.73,-1.  ) to[out=-60,in=165] (4.71,-6.31) -- (2.81,-3.18);
\draw[red][opacity=0.25][line width=12pt]
  (-8.07,-1.  ) -- (-1.73,-1.  ) -- ( 1.73,-1.  ) -- (5.01,-6.84);
\draw[red!50][->]
  (5.01,-6.84) -- (7.01,-6.84) node[right]{\color{red!50}$P(2)_A$};
\draw[orange][line width=1pt]
  (-4.87,-1.00) ellipse (3.7 and 1.3);
\draw[orange][->] (-4.87,-2.30) -- (-4.87,-3.50) node[below]{$a_1A$};
\draw[orange] (-5.87,-5.50) node{$a_1A = P(2)_{\ol{A}_{\C,1}} \cong T_{\C,1}(P(2)_A)$};
\draw[rotate around={60:(0,2)}]
  [line width=1pt]
  [postaction={on each segment={mid arrow=black}}]
  (0,2) -- (2,2);
\draw[rotate around={60:(0,2)}]
  (2.5,2) node{$4$};
\draw[rotate around={60:(0,2)}]
  [line width=1pt]
  [postaction={on each segment={mid arrow=black}}]
  (3,2) -- (5,2);
\draw[rotate around={60:(0,2)}]
  (5.5,2) node{$5$};
\draw[line width=1pt]
  [postaction={on each segment={mid arrow=black}}]
  (-2.23,-1)--(-4.23,-1);
\draw (-4.23,-1) node[left]{$6$};
\draw[line width=1pt]
  [postaction={on each segment={mid arrow=black}}]
  (-5.23,-1)--(-7.23,-1);
\draw (-7.23,-1) node[left]{$7$};
\draw[rotate=120]
  [line width=1pt]
  [postaction={on each segment={mid arrow=black}}]
  (-2.23,-1)--(-4.23,-1);
\draw[rotate=120] (-4.73,-1) node{$8$};
\draw[rotate=120]
  [line width=1pt]
  [postaction={on each segment={mid arrow=black}}]
  (-5.23,-1)--(-7.23,-1);
\draw[rotate=120] (-7.73,-1) node{$9$};
\draw[rotate =  0][line width=1pt]
  [postaction={on each segment={mid arrow=black}}]
   (-7.73,-1) to[out=45,in=180] (0, 2);
\draw[rotate =120][line width=1pt]
  [postaction={on each segment={mid arrow=black}}]
   (-7.73,-1) to[out=45,in=180] (0, 2);
\draw[rotate =240][line width=1pt]
  [postaction={on each segment={mid arrow=black}}]
   (-7.73,-1) to[out=45,in=180] (0, 2);
\draw[rotate =  0](-3.5,2  ) node{$b_1$};
\draw[rotate =120](-3.5,2  ) node{$b_2$};
\draw[rotate =240](-3.5,2  ) node{$b_3$};
\draw[rotate =  0]( 0.2,3.2) node{$c_1$};
\draw[rotate =120]( 0.2,3.2) node{$c_2$};
\draw[rotate =240]( 0.2,3.2) node{$c_3$};
\draw[rotate =  0]( 1.4,5.8) node{$d_1$};
\draw[rotate =120]( 1.4,5.8) node{$d_2$};
\draw[rotate =240]( 1.4,5.8) node{$d_3$};
\draw[rotate=240][postaction={on each segment={mid arrow=black}}]
     [line width=1pt]
     (-1.73+0.5,-1.  ) -- ( 1.73-0.5,-1.  );
\draw(-0.86, 0.5 ) node[left]{$a_1$};
\draw[rotate=  0][postaction={on each segment={mid arrow=black}}]
     [line width=1pt]
     (-1.73+0.5,-1.  ) -- ( 1.73-0.5,-1.  );
\draw[rotate=120](-0.86, 0.5 ) node[below]{$a_2$};
\draw[rotate=120][postaction={on each segment={mid arrow=black}}]
     [line width=1pt]
     (-1.73+0.5,-1.  ) -- ( 1.73-0.5,-1.  );
\draw[rotate=240](-0.86, 0.5 ) node[right]{$a_3$};
\fill[rotate=  0][white]( 0.  , 2.  ) circle (0.35);
\draw[rotate=  0]( 0.  , 2.  ) node{$1$};
\draw[rotate=120]( 0.  , 2.  ) node{$2$};
\draw[rotate=240]( 0.  , 2.  ) node{$3$};
\draw[red] (-8.95,-1) node[left]{$\ol{A}_{\C,1}=A/A\e_3A$};
\end{tikzpicture}
\caption{The recollement $\calR_{\C,1}$ corresponded by $a_1A$.}
\label{fig:mian examp}
\end{figure}
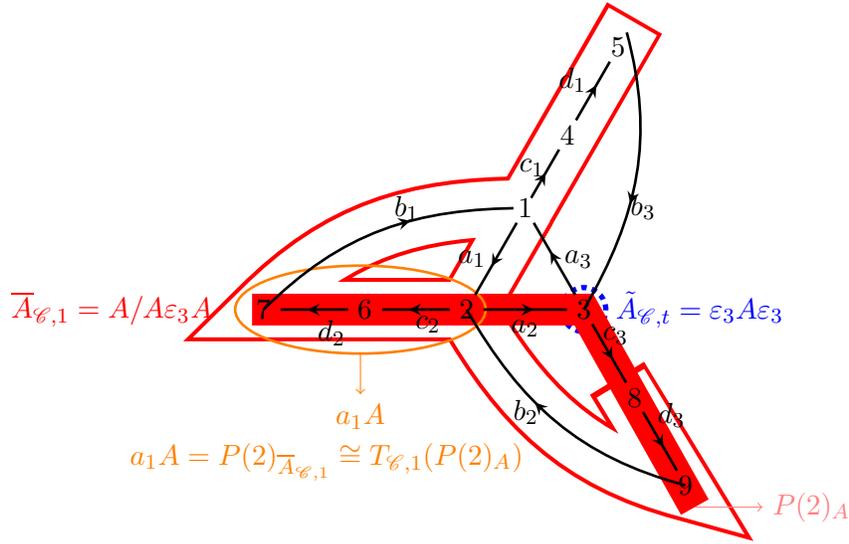
the $c_2d_2$ is the path corresponding to $a_1A$,
it can be seen as a path in the quiver of $\ol{A}_{\C,1}$,
if this case, $c_2d_2$ corresponds to the indecomposable projective right $\ol{A}_{\C,1}$ module $P(2)_{\ol{A}_{\C,1}}$, and we have
\[ T_{\C,1}(a_1A) \cong T_{\C,1}(P(2)_A) = P(2)_{\ol{A}_{\C,1}}. \]
Moreover, $A$ is representation-finite, and one can check that
\begin{center}
$\dim_{\kk} \res_{\C,1}(a_1A) = 0$,
$\dim_{\kk} \res_{\C,1}(a_2A) = \dim_{\kk}S(3) = 1$,
and $\dim_{\kk} \res_{\C,1}(a_3A) = 0$
\end{center}
are less than or equal to $1$.
\end{example}

\begin{example} \label{examp:coro-of main 1} \rm
Let $A=\kk\Q/\I$ be a gentle algebra whose quiver is given by \Pic \ref{fig:coro-of main 1} and the admissible ideal $\I=\langle a_1a_2, a_2a_3, a_3a_1, b_1b_2, b_2b_3, b_3b_4, b_4b_5, b_5b_1 \rangle$.
Then $(\Q,\I)$ has two oriented cycle $\C=a_1a_2a_3$ and $\mathscr{D} = b_1b_2b_3b_4b_5$ which are full-relational.
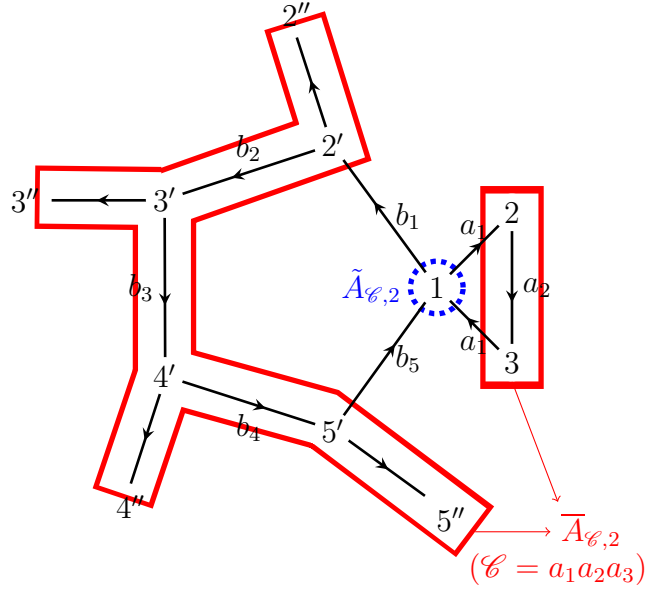
\begin{figure}[htbp]
\centering
\begin{tikzpicture}
\draw[red][rotate =  0][line width=24pt]
   ( 0.11, 3.55) -- ( 0.61, 1.95) -- (-1.62, 1.16) --
   (-1.62,-1.16) -- ( 0.51,-1.75) -- ( 2.51,-3.25);
\draw[red][rotate = 72][line width=24pt]
   ( 0.11, 3.55) -- ( 0.61, 1.95);
\draw[red][rotate =144][line width=24pt]
   ( 0.11, 3.55) -- ( 0.61, 1.95);
\draw[white][rotate =   0][line width=20pt]
   ( 0.13, 3.49) -- ( 0.61, 1.95) -- (-1.62, 1.16) --
   (-1.62,-1.16) -- ( 0.51,-1.75) -- ( 2.45,-3.22);
\draw[white][rotate =  72][line width=20pt]
   ( 0.13, 3.49) -- ( 0.61, 1.95);
\draw[white][rotate = 144][line width=20pt]
   ( 0.13, 3.49) -- ( 0.61, 1.95);
\draw[ red ][line width=24pt] (3,1.35) -- (3,-1.35);
\draw[white][line width=20pt] (3,1.25) -- (3,-1.25);
\draw[red][->] ( 2.51,-3.25) -- ( 3.51,-3.25) node[right]{$\ol{A}_{\C,2}$};
\draw[red][->] (3,-1.25) -- ( 3.61,-2.85);
\draw[red] ( 3.61,-3.75) node{($\C=a_1a_2a_3$)};
\draw[blue][dotted][line width=2pt]
  (2,0) circle (0.35cm)
  (1.7,0) node[left]{$\w{A}_{\C,2}$};
\draw[rotate =   0][line width=1pt]
  [postaction={on each segment={mid arrow=black}}] (2,0) -- (0.61, 1.90);
\draw[rotate =  72][line width=1pt]
  [postaction={on each segment={mid arrow=black}}] (2,0) -- (0.61, 1.90);
\draw[rotate = 144][line width=1pt]
  [postaction={on each segment={mid arrow=black}}] (2,0) -- (0.61, 1.90);
\draw[rotate = 216][line width=1pt]
  [postaction={on each segment={mid arrow=black}}] (2,0) -- (0.61, 1.90);
\draw[rotate = 288][line width=1pt]
  [postaction={on each segment={mid arrow=black}}] (2,0) -- (0.61, 1.90);
\draw[line width=1pt]
  [postaction={on each segment={mid arrow=black}}]
  (2,0) -- (3,1) -- (3,-1) -- (2,0);
\draw[line width=1pt][rotate = -72]
  [postaction={on each segment={mid arrow=black}}]
  (-1.62,1.16) -- (-3.12,1.16) node[above]{$2''$};
\draw[line width=1pt][rotate =   0]
  [postaction={on each segment={mid arrow=black}}]
  (-1.62,1.16) -- (-3.12,1.16) node[left]{$3''$};
\draw[line width=1pt][rotate =  72]
  [postaction={on each segment={mid arrow=black}}]
  (-1.62,1.16) -- (-3.12,1.16) node[below]{$4''$};
\draw[line width=1pt][rotate = 144]
  [postaction={on each segment={mid arrow=black}}]
  (-1.62,1.16) -- (-3.12,1.16) node[below right]{$5''$};
\fill[white][rotate =   0] (2,0) circle (0.25);
\fill[white][rotate =  72] (2,0) circle (0.25);
\fill[white][rotate = 144] (2,0) circle (0.25);
\fill[white][rotate = 216] (2,0) circle (0.25);
\fill[white][rotate = 288] (2,0) circle (0.25);
\fill[white] (3, 1) circle (0.25);
\fill[white] (3,-1) circle (0.25);
\draw[rotate =   0] (2,0) node{$1$};
\draw[rotate =  72] (2,0) node{$2'$};
\draw[rotate = 144] (2,0) node{$3'$};
\draw[rotate = 216] (2,0) node{$4'$};
\draw[rotate = 288] (2,0) node{$5'$};
\draw (3, 1) node{$2$} (3,-1) node{$3$};
\draw (3,0) node[right]{$a_2$};
\draw ( 2.5, 0.5) node[above]{$a_1$};
\draw ( 2.5,-0.5) node[below]{$a_1$};
\draw[rotate =   0]
 (0.5*2.61, 0.5*1.90) node[right]{$b_1$};
\draw[rotate =  72]
 (0.5*2.61, 0.5*1.90) node[above]{$b_2$};
\draw[rotate = 144]
 (0.5*2.61, 0.5*1.90) node[left]{$b_3$};
\draw[rotate = 216]
 (0.5*2.61, 0.5*1.90) node[below]{$b_4$};
\draw[rotate = 288]
 (0.5*2.61, 0.5*1.90) node[right]{$b_5$};
\end{tikzpicture}
\caption{The gentle algebra given in Example \ref{examp:coro-of main 1} and its recollement $\calR_{\C,2}$.}
\label{fig:coro-of main 1}
\end{figure}
The vertex $1$ is a common vertex of $\C$ and $\mathscr{D}$.
Now, we consider the recollement $\calR_{\C,2}$, then the algebras $\ol{A}_{\C,2}=A/A\e_1A$ and $\w{A}_{\C,2}=\e_1A\e_1$ are shown in the {solid} part and {dashed} part, respectively.
One can check that the functor $T_{\C,2}$ sends $b_2A$ ($\in\npGproj(A)$) to the indecomposable right $\ol{A}_{\C,2}$-module
$b_2\ol{A}_{\C,2}
= \left(\begin{smallmatrix}
   3' \\ 3''
  \end{smallmatrix}\right)_{\ol{A}_{\C,2}}$
which is not G-projective.
This example  shows that the condition ``arbitrary two full-relational oriented cycles of a gentle algebra have no common vertex'' in Corollary \ref{coro:of main 1} is necessary.
\end{example}

In Remark \ref{rmk: rmk of main 2}, we show that $T_{\C,t}|_{\Gproj(A)}: \Gproj(A) \to \proj(\ol{A}_{\C,t})$ is surjective if $A$ is a gentle one-cycle algebra. The following example shows that if the number of cycles is greater than or equal to $2$, then $T_{\C,t}(G)$ may be not a projective right $\ol{A}_{\C,t}$-module for some $G\in \ind(\npGproj(A))$.

\begin{example} \rm \label{examp:rmk of main 2}
Consider the gentle algebra $A=\kk\Q/\I$ given in Example \ref{examp:counter-examp} (that is, $\Q$ is shown in \Pic \ref{fig:counter-examp} and $\I$ is generated by $a_1a_2$, $a_2a_3$, $a_3a_1$, $b_1b_2$, $b_2b_3$, $b_3b_1$).
Take $\C=b_1b_2b_3$ and $t=1$, then $\ol{A}_{\C,1} = A/A\e_3 A$.

By Theorem \ref{thm:Kalck}, $A$ has six non-projective indecomposable G-projective right $A$-modules:
\begin{align*}
  &  a_1A \cong S(2)_A,
  && a_2A \cong S(3)_A,
  && a_3A \cong \left({}^{1}_{2'}\right)_A,
  \\
  &  b_1A \cong S(2')_A,
  && b_2A \cong S(3')_A,
  && b_3A \cong \left({}^{1}_{2}\right)_A.
\end{align*}
We have $T_{\C,1}(a_1A) = a_1A\otimes_A \ol{A}_{\C,1} \cong P(2)_{\ol{A}_{\C,1}}$, $T_{\C,1}(a_2A)=0$ and $T_{\C,1}(a_3A)=0$ which are projective in $\proj(\ol{A}_{\C,1})$.
However, one can check that
\begin{align*}
 & T_{\C,1}(b_1A) \cong S(2')_{\ol{A}_{\C,1}}
   \not\cong P(2')_{\ol{A}_{\C,1}}
   = \left({_{3'}^{2'}}\right)_{\ol{A}_{\C,1}}, \\
 & T_{\C,1}(b_2A) \cong S(3')_{\ol{A}_{\C,1}}
   \not\cong P(3')_{\ol{A}_{\C,1}}
   = \left(\begin{smallmatrix}
       3' \\ 1 \\ 2
   \end{smallmatrix}\right)_{\ol{A}_{\C,1}}, \\
\text{and }
  & T_{\C,1}(b_3A) \cong \left({}^1_2\right)_{\ol{A}_{\C,1}}
    \not\cong P(1)_{\ol{A}_{\C,1}}
    = \left({{}_{2'}} {{}^{1}} {{}_{2}} \right)_{\ol{A}_{\C,1}}.
\end{align*}
Right $\ol{A}_{\C,1}$-modules $T_{\C,1}(b_1A)$, $T_{\C,1}(b_2A)$, and $T_{\C,1}(b_3A)$ are not in $\proj(\ol{A}_{\C,1})$.
However, they are non-projective indecomposable G-projective right $\ol{A}_{\C,1}$-modules.
Then,
the functor $T_{\C,1}: \modcat(A) \to \modcat(\ol{A}_{\C,1})$ preserves G-projectives.
which satisfies the conclusion of Corollary \ref{coro:of main 1},
but $A$ has two full-relational oriented cycles which have the common vertex $1$.
\end{example}


\section*{Acknowledgements}

The authors would like to thank Martin Kalck and Yongyun Qin for helpful discussions. 

\section*{Data Availability}

Data sharing not applicable to this article as no datasets were generated or analysed during
the current study.

\section*{Funding}

Yu-Zhe Liu is supported by
National Natural Science Foundation of China (Grant Nos. 12401042, 12171207),
Guizhou Provincial Basic Research Program (Natural Science) (Grant No. ZK[2024]YiBan066)
and Scientific Research Foundation of Guizhou University (Grant Nos. [2023]16, [2022]53, [2022]65).
Dajun Liu is supported by the National Natural Science Foundation of
China (No. 12101003), the Natural Science Foundation of Anhui province (No. 2108085QA07).
Xin Ma is supported by Henan University of Engineering (DKJ2019010).


\def\cprime{$'$}






\end{document}